\newtheorem{remark}[theorem]{Remark}
\def\cal{\mathcal}
\def\btd{{\bigtriangledown}}
\def\E{{\mathcal E}}
\def\O{\Omega}
\newcommand\ddelta\bigtriangledown
\newcommand\ld\lambda
\newcommand\Ld\Lambda
\def \bP {\Bbb P}
\def \bZ {\Bbb Z}
\begin{document}
\title{Finite volume schemes of any order \\
on rectangular meshes}

\author{Zhimin Zhang
\thanks{Department of Mathematics, Wayne State University, Detroit, MI 48202, USA.
     This author is supported in part by the US National Science Foundation through grant
     DMS-111530, the Ministry of Education of China through the Changjiang Scholars program,
    and Guangdong Provincial Government of China through the
   ``Computational Science Innovative Research Team" program.}
\and Qingsong Zou
\thanks{Corresponding author, College of Mathematics and Scientific Computing and Guangdong Province Key Laboratory of Computational Science, Sun Yat-sen
          University, Guangzhou, 510275, P. R. China. This author is supported in part by
          the National Natural Science Foundation of China under the grant 11171359 and in part by the Fundamental Research Funds for the Central
          Universities of China.}
}
\maketitle
\begin{abstract}
In this paper, we analyze vertex-centered finite volume method (FVM) of any order for elliptic equations on rectangular meshes.
The novelty is a unified proof of the inf-sup condition, based on which, we show that the FVM approximation converges to the exact solution with the optimal rate in the energy norm.
Furthermore, we discuss superconvergence property of the FVM solution.
With the help of this superconvergence result, we find that the FVM solution also converges to the exact solution with the optimal rate in the $L^2$-norm.
Finally, we validate our theory with several numerical experiments.
 \vskip .7cm
{\bf AMS subject classifications.} \ {Primary 65N30; Secondary 45N08}



\vskip .3cm

 {\bf Key words.} \ {High order, finite volume method, inf-sup condition, superconvergence.}
\end{abstract}

\section{Introduction}
\setcounter{equation}{0}

\setcounter{equation}{0}
During the past several decades, the {\it finite volume method} (FVM) has attracted much attention. We refer to \cite{Bank.R;Rose.D1987}-\cite{ChenWuXu2011}, \cite{Emonot1992}-\cite{HymanKnappScovel1992}, \cite{Nocolaides1995}-\cite{C.Shu2003}, \cite{Xu.J.Zou.Q2009, zou2009a}  for an incomplete list of references.
Due to its local conservation of numerical fluxes and other advantages, the FVM
is very popular in scientific and engineering computations, especially in computational fluid dynamics, see, e.g., \cite{Emonot1992,HymanKnappScovel1992,Lazarov1996} and \cite{Nocolaides1995}-\cite{C.Shu2003}.

Comparing to its wide applications, the mathematical theory of FVM  (cf., \cite{Barth.T;Ohlberger2004, EymardGallouetHerbin2000, LeVeque2002, Li.R2000})
 has not been fully developed, at least,  not as satisfactory as that for the finite element method (FEM).
In fact, since the FV schemes depend heavily on the underlying meshes, the error analysis in the literature was often done case-by-case.
For instance, the linear FV scheme can be regarded as a small perturbation of its corresponding linear FE scheme,
whose convergence properties  have been well studied, see e.g., \cite{Bank.R;Rose.D1987, Cai.Z1991, Ewing.R;Lin.T;Lin.Y2002, Hackbusch.W1989a}. On the other hand,
high-order FV schemes are substantially different from their corresponding FE schemes, therefore only a few special high-order schemes have been studied,
 see \cite{ Cai.Z_Park.M2003, Chen.L, ChenWuXu2011, Liebau1996, Plexousakis_2004, Tian.Chen_1991, Xu.J.Zou.Q2009}. So far, we have not seen analysis for FV schemes of an arbitrary order.

 In this paper, we provide a unified analysis for vertex-centered FV schemes of any order on rectangular meshes.  We construct our FV schemes under the framework of the Petrov-Galerkin method by letting the trial space be the Lagrange finite element space  with the interpolation points being the Lobatto points  and by constructing control volumes  with the Gauss points  in a rectangular element.

  It is known that the proof of the stability  is a challenging task in the error analysis of FV schemes. Earlier works (see, e.g. \cite{Liebau1996, Li.R2000,Xu.J.Zou.Q2009,ChenWuXu2011}) utilized {\it element stiffness matrix analysis} for this task. The {\it element stiffness matrix analysis} often requires to calculate all
   eigenvalues of an element stiffness matrix and thus is difficult to be generalized to schemes
  of any order. Our new approach in this paper for proving the stability (or in general the inf-sup condition) is different from the {\it element stiffness matrix analysis}.
  A novel and non-traditional global mapping from the trial space to the test space is introduced.
  This mapping avoids calculating eigenvalues of an element stiffness matrix
  and makes the establishment of the global inf-sup condition for any order possible.
  An interesting feature is that when the coefficient $\alpha$ in (\ref{eq:2.1}) is a piecewise constant function
  with respect to the underlying mesh, the inf-sup condition is uniformly valid for any mesh size $h$, i.e.,
  there is no requirement ``for sufficiently small $h$".
  In particular, for the Poisson equation, the inf-sup condition is uniformly valid for any mesh size $h$.
  Once the inf-sup property has been established, the error analysis in the energy norm is then a routine work.

Another feature of this work is the superconvergence analysis. We prove that the FV solution $u_{\cal P}$ is super-close to the Lobatto interpolant $u_I$ of the exact solution, namely, $|u_{\cal P} - u_I|_1$ converges one order higher than the optimal rate. The result simulates the counterpart result in the FEM. A by-product of this superconvergence result is the optimal $L^2$ error estimate. Conventionally, the $L^2$ error estimate is accomplished by the duality argument or the so-called Aubin-Nitsche trick. Unfortunately, this technique is very difficult to be used in our case for higher-order FVM. The adoption of the superconvergence analysis avoids this difficulty.

    We organize the rest of the paper as follows.
    In Section 2 we present  FV schemes of any order for elliptic equations on rectangular meshes.
    In Section 3 we provide convergence analysis and establish the optimal convergence rate in both
    $H^1$ and $L^2$ norms. The superconvergence property of the FVM solution has also been studied in this section.
    Next, numerical examples are provided in Section 4 to confirm our theory. And lastly, some concluding remarks are given in Section 5.

  In the rest of this paper, ``$A\lesssim B$" means that $A$ can be
  bounded by $B$ multiplied by a constant which is independent of
  the parameters which $A$ and $B$ may depend on. ``$A\sim B$" means $``A\lesssim B"$ and $``B\lesssim A"$.

\bigskip
\section{FVM Schemes of Any Order}
\setcounter{equation}{0}

In this section, we present finite volume schemes of any order to solve the following second-order elliptic boundary value problem
\begin{eqnarray}\label{eq:2.1}
-\btd\cdot(\alpha \btd u) =&f  &\ \ {\rm in}\ \ \ \Omega,\\ \label{eq:2.2}
      u=&0 &\ \ {\rm on}\ \ \  \Gamma,
\end{eqnarray}
where  $\Omega=[a,b]\times [c,d]$ is a rectangle, $\Gamma=\partial\Omega$, $\alpha\in L^\infty$ and it is bounded from below: There exists a constant $\alpha_0>0$ such that $\alpha(x)\geq \alpha_0$ for almost all $x\in \Omega$,  and  $f$ is a real-valued function defined on $\Omega$.

We present our finite volume schemes in the framework of Petrov-Galerkin method. We first construct the primal partition ${\cal P}$ and the trial space.
 Let $a=x_0< x_1<\ldots<x_m=b,\  c=y_0< y_1<\ldots<y_n=b$. For a positive integer $k$, let $\bZ_k=\{1,\ldots,k\}$ and $\bZ_k^0=\{0,1,\ldots,k\}$.
 For all $i\in \bZ_m, j\in \bZ_n$, let $h_i^x=x_i-x_{i-1},h_j^y=y_j-y_{j-1}$ and
  $h=\max\left\{\max (h^x_i,h_j^y)\big| (i,j)\in \bZ_{m,n}\right\}$ where $\bZ_{m,n}=\bZ_m\times \bZ_n$. We denote the associated partition of $\Omega$ by
\[
  {\cal P}= \left\{\tau_{i;j}\big| (i,j)\in\bZ_{m,n}\right\}
\]
where  $\tau_{i;j}=[x_{i-1},x_i]\times [y_{j-1},y_j].$ We choose the trial space as the standard FEM space defined by
\[
   U^r_{\cal P}=\{v\in C(\Omega): v|_{\tau}\in {\mathbb Q}_r, \forall\tau\in{\cal P}, v|_{\partial\Omega}=0\},
\]
  where ${\mathbb Q}_r$ is the set of all bi-polynomials of degree no more than $r$. Obviously, $\dim U^r_{\cal P}=(mr-1)(nr-1)$.

 We next describe the dual partition and the test space.  Let $G_1,\ldots,G_r$ be $r$ Gauss points, i.e., zeros of the Legendre polynomial of $r$th degree,
   on the interval $[-1,1]$.
For any given $(i,j)\in \bZ_{m,n}$, let
\[
  g^x_{i,k}=\frac {1}{2}(x_i+x_{i-1}+h_i^xG_k),\  g^y_{j,l}=\frac {1}{2}(y_j+y_{j-1}+h_j^y G_l);\ \forall k,l\in \bZ_r
\]
be the Gauss points in the interval $[x_{i-1},x_i]$ and $[y_{j-1},y_j]$, respectively. Let
\[
  \hat{r}_i^x=\left\{\begin{array}{lll}
 r & \text{if} &i\in \bZ_m,\\
 1 & \text{if} &i=0,
\end{array}
\right. \quad  \hat{r}_j^y=\left\{\begin{array}{lll}
 r & \text{if} &j\in \bZ_n,\\
1 & \text{if} &j=0,
\end{array}
\right.
\]
and
\[
g^x_{0,1}=a,\  g^y_{0,1}=c;\ g^x_{m, r+1}=b,\  g^y_{n,r+1}=d;
\]
and for all $i\in \bZ_{m-1}, j\in \bZ_{n-1}$
\[
g^x_{i,\hat{r}_i^x+1}=g^x_{i+1,1},\quad   g^y_{j,\hat{r}_j^y+1}=g^y_{j+1,1}.
\]

  With these Gauss points,  we construct a dual partition
\[
{\cal P}'=\{\tau'_{i,k;j,l}\big| (i,k,j,l)\in {\cal Z}_0\},
\]
where ${\cal Z}_0=\bZ^0_m\times\bZ_{\hat{r}_i^x}\times\bZ^0_n\times\bZ_{\hat{r}_j^y}$ and the   control volume
\[
\tau'_{i,k;j,l}:=[g^x_{i,k}, g^x_{i,k+1}]\times [g^y_{j,l}, g^y_{j,l+1}].
\]
The test space $V_{\cal P'}$ consists of the piecewise constants with respect
to the partition ${\cal P'}$  which vanishes on the boundary control volumes.
In other words,
\[
V_{\cal P'}=\text{Span}\left\{\psi_{i,k;j,l}\big| (i,k;j,l)\in {\cal Z}_1\right\}
\]
where $\psi_{i,k;j,l}=\chi_{\tau'_{i,k;j,l}} $ is the characteristic function on the control volume $\tau'_{i,k;j,l}$,
${\cal Z}_1=\bZ_m\times\bZ_{r_i^x}\times\bZ_n\times\bZ_{r_j^y}$
 and
\[
  r_i^x=\left\{\begin{array}{lll}
 r & \text{if} &i\in \bZ_{m-1},\\
 r-1 & \text{if} &i=m,
\end{array}
\right. \quad  r_j^y=\left\{\begin{array}{lll}
 r & \text{if} &j\in \bZ_{n-1},\\
r-1 & \text{if} &j=n.
\end{array}
\right.
\]
Note that $\dim V_{\cal P'}=(mr-1)\times (nr-1)=\dim U^r_{\cal P}$.

We are now ready to present our finite volume schemes. The finite volume solution of \eqref{eq:2.1} and \eqref{eq:2.2}
is a function $u_{\cal P}\in U^r_{\cal P}$ which satisfies the following conservation law
\begin{equation}\label{conserve}
 -\int_{\partial\tau'_{i,k;j,l}} \alpha\frac{\partial u_{\cal P}}{\partial {\bf n}}ds =\int_{\tau'_{i,k;j,l}}fdxdy
\end{equation}
on each control volume $\tau'_{i,k;j,l},  (i,k;j,l)\in {\cal Z}_1$, where ${\bf n}$ is the unit outward normal on the boundary curve $\partial\tau'_{i,k;j,l}$.  Let $w_{\cal P'}\in V_{\cal P'}$,  $w_{\cal P'}$ can be written as
\[
w_{\cal P'}=\sum_{(i,k;j,l)\in {\cal Z}_1}w_{i,k;j,l}\psi_{i,k;j,l}
\]
 where the coefficients $w_{i,k;j,l}$ are constants. Multiplying \eqref{conserve} with $w_{i,k;j,l}$ and then summing up for all $i,k,j,l$, we obtain
\[
 -\sum_{(i,k;j,l)\in {\cal Z}_1} w_{i,k;j,l}\int_{\partial\tau'_{i,k;j,l}}\alpha \frac{\partial u_{\cal P}}{\partial {\bf n}}ds=\int_\O f w_{\cal P'}dxdy.
\]

Defining the FVM bilinear form for all $v\in H^1_0(\Omega), w_{\cal P'}\in V_{\cal P'}$ as
\begin{equation}\label{biform}
   a_{\cal P}(v,w_{\cal P'})= -\sum_{(i,k;j,l)\in {\cal Z}_1} w_{i,k;j,l}\int_{\partial\tau'_{i,k;j,l}}\alpha \frac{\partial v}{\partial {\bf n}}ds,
\end{equation}
the finite volume method for solving equations \eqref{eq:2.1} and \eqref{eq:2.2} reads as :  Find $u_{\cal P}\in U^r_{\cal P}$ such that
\begin{equation}\label{bilinear}
   a_{\cal P}(u_{\cal P},w_{\cal P'})=(f,w_{\cal P'}),\ \ \forall w_{\cal P'}\in V_{\cal P '}.
\end{equation}

Noticing that ${\cal Z}_1\subset {\cal Z}_0$, a function $w_{\cal P'}\in V_{\cal P'}$
can also be written as
\[
w_{\cal P'}=\sum_{(i,k;j,l)\in {\cal Z}_0}w_{i,k;j,l}\psi_{i,k;j,l},
\]
if we let
\[
w_{i,k;j,l}=0, \forall (i,k;j,l)\in {\cal Z}_0\setminus {\cal Z}_1.
\]
For all $(i,k;j,l)\in {\cal Z}_2=\bZ_m\times \bZ_r\times \bZ_n\times \bZ_r$,
we define the jumps of $w_{\cal P'}$ in the $x$-direction and $y$-direction  as
\[
   [w_{i,k;j,l}]^x=w_{i,k;j,l}-w_{i,k-1;j,l},\quad
  [w_{i,k;j,l}]^y=w_{i,k;j,l}-w_{i,k;j,l-1},
\]
respectively.

A simple calculation yields that
\begin{eqnarray}\nonumber
   a_{\cal P}(v,w_{\cal P'})
&=& \sum_{(i,k;j,l)\in {\cal Z}_3}[w_{i,k;j,l}]^x\int_{g^y_{j,l}}^{g^y_{j,l+1}}\alpha(g^x_{i,k}, y)\frac{\partial v}{\partial x}(g^x_{i,k}, y)dy\\
&+& \sum_{(i,k;j,l)\in {\cal Z}_4}[w_{i,k;j,l}]^y\int_{g^x_{i,k}}^{g^x_{i,k+1}}\alpha(x,g^y_{j,l})\frac{\partial v}{\partial y}(x,g^y_{j,l})dx,
\end{eqnarray}
where ${\cal Z}_3=\bZ_m\times \bZ_r\times\bZ_n\times\bZ_{r^y_j}$ and ${\cal Z}_4=\bZ_m\times \bZ_{r_i^x}\times\bZ_n\times\bZ_r$.
\bigskip

\section{Error Analysis}
\setcounter{equation}{0}

The error analysis of FVM can also be done under the framework of Petrov--Galerkin methods, see  \cite{Bank.R;Rose.D1987},  \cite{Li.R2000}, and \cite{Xu.J.Zou.Q2009}.
Following this approach, our FVMs error analysis requires the study of the continuity (boundedness) and inf-sup property of the FVM bilinear form.

\subsection{Continuity}
 Let $\E_{\cal P'}$ be the set of interior edges of the dual partition ${\cal P'}$. Then for all\ $v\in\ H_0^1(\Omega),\ w_{\cal P'}\in \ V_{\cal P'}$,
\begin{eqnarray} \label{bilinear1}
a_{\cal P}(v,w_{\cal P'})&=& \sum_{E\in \E_{\cal P'}}[w_{\cal P'}]\int_E \alpha\frac{\partial v}{\partial {\bf n}}ds,
\end{eqnarray}
where  $[w_{\cal P'}] = w_{\cal P'}|_{\tau_2} - w_{\cal P'}|_{\tau_1}$ across the common edge $E=\tau_1\cap \tau_2$ of two rectangles $\tau_1, \tau_2 \in {\cal P'}$ and ${\bf n}$ denotes the normal vector on $E$ pointing from $\tau_1$ to $\tau_2$.

To study the continuity of $a(\cdot,\cdot)$ ,  we define a semi-norm in the test space $V_{\cal P'}$ for all $w_{\cal P'}\in V_{\cal P'}$
by
\[
\big|w_{\cal P'}\big|_{\cal P'}=\left(\sum_{E\in {\cal E}_{\cal P'}}
h_E^{-1}\int_E \left[w_{\cal P'}\right]^2ds\right)^{1\over 2},
\]
where  $h_E$ is the diameter
of an edge $E$, and  a semi-norm in the so-called {\it broken $H^2$ space}
\[
   H_{\cal P}^{2}(\Omega)=\{v\in C(\Omega): v|_{\tau}\in H^2, \forall\tau\in{\cal P}\}
\]
 for all $ v\in H^2_{\cal P}(\Omega)$ by
\[
  \big|v\big|_{\cal P}=\left(\sum_{\tau\in\cal P}|v|_{1,\tau}^2+h_\tau^2|v|_{2,\tau}^2\right)^{\frac 12},
\]
where $h_\tau$ is the diameter of $\tau$.
Note that the mesh-dependent semi-norm $\big|\cdot\big|_{\cal P}$ has been used in
the discontinuous Galerkin method (cf., \cite{ArnoldBrezziCockburnMarini2003}) and was introduced first into the FVM in \cite{Xu.J.Zou.Q2009}.

\begin{theorem}\label{thm:continuity}
The finite volume bilinear form $a_{\cal P}(\cdot,\cdot)$
 is variationally exact:
 \begin{equation}\label{exact}
a_{\cal P}(u,w_{\cal P'})=(f,w_{\cal P'})\ \ \ \ \forall\  w_{\cal P'}\in \ V_{\cal P'}
  \end{equation}
 and continuous : for all\ $v\in\ H_0^1(\Omega)\cap H_{\cal P}^{2}(\Omega),\ w_{\cal P'}\in \ V_{\cal P'}$,
\begin{equation}\label{bounded}
|a_{\cal P}(v,w_{\cal P'})|\le M |v|_{\cal P} |w_{\cal P'}|_{\cal P'}
\end{equation}
  where the constant $M>0$  depends only on $\alpha$ and $r$.
\end{theorem}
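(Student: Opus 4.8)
The statement comprises two independent assertions, and I would establish them separately.

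For the variational exactness \eqref{exact}, the idea is to test the strong form of the equation directly against the control volumes. Assuming the exact solution $u$ is regular enough that $\alpha\btd u$ possesses a normal trace on each $\partial\tau'_{i,k;j,l}$ (guaranteed by $u\in H^2(\O)$, consistent with the broken-$H^2$ setting of the theorem), I would integrate $-\btd\cdot(\alpha\btd u)=f$ over an arbitrary control volume and apply the divergence theorem to get
\[
  -\int_{\partial\tau'_{i,k;j,l}}\alpha\frac{\partial u}{\partial{\bf n}}\,ds=\int_{\tau'_{i,k;j,l}}f\,dxdy .
\]
Thus $u$ itself satisfies the conservation law \eqref{conserve}. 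Multiplying by $w_{i,k;j,l}$, summing over $(i,k;j,l)\in{\cal Z}_1$, and matching the result against the definition \eqref{biform} of $a_{\cal P}$ then yields \eqref{exact} at once.

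For the continuity bound \eqref{bounded}, I would work from the edge representation \eqref{bilinear1}. Because each jump $[w_{\cal P'}]$ is constant along its dual edge $E$, we have $h_E^{-1}\int_E[w_{\cal P'}]^2\,ds=[w_{\cal P'}]^2$, so $|w_{\cal P'}|_{\cal P'}^2=\sum_{E\in\E_{\cal P'}}[w_{\cal P'}]^2$. Applying the Cauchy--Schwarz inequality on each edge integral, $|\int_E\alpha\,\partial v/\partial{\bf n}\,ds|\le\|\alpha\|_{L^\infty}h_E^{1/2}(\int_E|\partial v/\partial{\bf n}|^2ds)^{1/2}$, and then again across the sum over edges, I would obtain
\[
|a_{\cal P}(v,w_{\cal P'})|\le\|\alpha\|_{L^\infty}\Big(\sum_{E\in\E_{\cal P'}}[w_{\cal P'}]^2\Big)^{1/2}\Big(\sum_{E\in\E_{\cal P'}}h_E\int_E\Big|\frac{\partial v}{\partial{\bf n}}\Big|^2ds\Big)^{1/2}.
\]
The first factor is exactly $|w_{\cal P'}|_{\cal P'}$, so the whole bound reduces to the single inequality $\sum_{E}h_E\int_E|\partial v/\partial{\bf n}|^2\,ds\lesssim|v|_{\cal P}^2$.

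This last inequality is a scaled trace estimate, and I expect it to be the main obstacle. The dual edges lie on lines through interior Gauss points of the primal elements, but a vertical edge $\{x=g^x_{i,k}\}\times[g^y_{j,l},g^y_{j,l+1}]$ can still cross a horizontal primal mesh line $y=y_j$ when $l$ sits at the junction of two elements, so $v$ is only piecewise $H^2$ along $E$. I would therefore split each edge into the portions lying in single primal elements $\tau$, on each of which $v|_\tau\in H^2(\tau)$, and apply the trace inequality on the reference element $\hat\tau$: the trace of $\partial v/\partial{\bf n}$ (a first derivative of $v$) on an interior segment is controlled by $\|\partial v/\partial{\bf n}\|_{H^1(\hat\tau)}\lesssim|\hat v|_{1,\hat\tau}+|\hat v|_{2,\hat\tau}$. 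A standard affine scaling from $\hat\tau$ to $\tau$ then gives, for the part of $E$ inside $\tau$,
\[
\int_{E\cap\tau}\Big|\frac{\partial v}{\partial{\bf n}}\Big|^2ds\lesssim h_\tau^{-1}|v|_{1,\tau}^2+h_\tau|v|_{2,\tau}^2 ,
\]
and, since $h_E\lesssim h_\tau$ by the shape-regularity of the mesh, multiplying through produces $h_E\int_{E\cap\tau}|\partial v/\partial{\bf n}|^2ds\lesssim|v|_{1,\tau}^2+h_\tau^2|v|_{2,\tau}^2$. Summing over all edge-portions, and noting that each primal element meets only $O(r)$ of them, yields $\sum_E h_E\int_E|\partial v/\partial{\bf n}|^2ds\lesssim\sum_{\tau\in{\cal P}}(|v|_{1,\tau}^2+h_\tau^2|v|_{2,\tau}^2)=|v|_{\cal P}^2$, as required. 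Tracking the constants shows that $M$ depends only on $\|\alpha\|_{L^\infty}$ and on $r$ (through the reference trace constant and the $O(r)$ edge count per element). The delicate points are getting the $h_\tau$-powers in the scaling exactly right, so that the $h_E$-weight of $|\cdot|_{\cal P'}$ balances the $h_\tau^2|v|_{2,\tau}^2$ term of $|\cdot|_{\cal P}$, and handling the edges that straddle primal interfaces, which is precisely what forces the use of the broken space $H^2_{\cal P}(\O)$ rather than global $H^2$.
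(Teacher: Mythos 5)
Your proposal is correct and follows essentially the same route as the paper: Green's formula on each control volume for the exactness \eqref{exact}, then Cauchy--Schwarz over the dual edges combined with a scaled trace inequality and shape regularity for the bound \eqref{bounded}. Your treatment is in fact slightly more careful than the paper's, since you explicitly split dual edges at primal interfaces where $v$ is only piecewise $H^2$, whereas the paper states the trace estimate directly on the union of the two adjacent control volumes.
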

\begin{proof}
 First, \eqref{exact} follows  by multiplying \eqref{eq:2.1} with an arbitrary function $w_{\cal P'}\in V_{\cal P'} $
and then using Green's  formula in each control volume $\tau\in {\cal P}'$.

Next we prove \eqref{bounded}.  By the Cauchy-Schwartz inequality, for all\ $v\in\ H_0^1(\Omega),\ w_{\cal P'}\in \ V_{\cal P'}$,
we have
\[
a_{\cal P}(v,w_{\cal P'})
\le \|\alpha\|_\infty|w_{\cal P'}|_{\cal P'} \left(\sum_{E\in \E_{\cal P'}} h_E\int_E \left(\frac{\partial v}{\partial {\bf n}}\right)^2ds\right)^{\frac12}.
\]
By the trace inequality and the shape regularity of ${\cal P},$
\[
\left(h_E\int_E \left(\frac{\partial v}{\partial {\bf n}}\right)^2ds\right)^{\frac12}\lesssim |v|_{1,\tau_1\cup\tau_2}+ h_E|v|_{2,\tau_1\cup\tau_2}
\]
where $\tau_1,\tau_2\in {\cal P}'$ are  two control volumes  sharing the common edge $E$.
Therefore,
\begin{eqnarray*}
a_{\cal P}(v,w_{\cal P'})
&\lesssim& |w_{\cal P'}|_{\cal P'} \left(\sum_{E\in \E_{\cal P'}} |v|^2_{1,\tau_1\cup\tau_2}+ h_E^2|v|^2_{2,\tau_1\cup\tau_2}\right)^{\frac12}\\
&\lesssim& |w_{\cal P'}|_{\cal P'} \left(\sum_{\tau\in {\cal P}} |v|^2_{1,\tau}+ h_\tau^2|v|^2_{2,\tau}\right)^{\frac12}.
\end{eqnarray*}
Then the boundedness \eqref{bounded} is proved.
\end{proof}

\bigskip

\subsection{Inf-sup condition}
This subsection is the core of the paper. The analysis here is technical, and yet, it is new and no-traditional.
A key step is the introduction of a global projection (\ref{piv}) based on the idea of the Gauss quadrature.
We begin with some definitions and  notations. We denote by $A_j, j\in \bZ_r$ the weights of the Gauss quadrature
\[
Q_r(F)=\sum_{j=1}^r A_j F(G_j)
\]
for computing the integral
\[
I(F)=\int_{-1}^1 F(x) dx.
\]
It is well-known that $Q_r(F)=I(F)$ for all $F\in \bP_{2r-1}(-1,1)$.
For any given $(i,j)\in \bZ_{m,n}$, let
\[
  A^x_{i,k}=\frac{1}{2}h^x_iA_k,\  A^y_{j,k}=\frac {1}{2}h^y_jA_k,\  k\in \bZ_r
\]
be the Gauss weights in the intervals $[x_{i-1},x_i]$ and $[y_{j-1},y_j]$, respectively.
On the other hand,  $[w_{i,k;j,l}]^x$ and  $[w_{i,k;j,l}]^y$ are well defined for all $(i,k;j,l)\in {\cal Z}_2$ and we have
\[
[[w_{i,k;j,l}]^x]^y=[[w_{i,k;j,l}]^y]^x=w_{i,k;j,l}+w_{i,k-1;j,l-1}-w_{i,k-1;j,l}-w_{i,k;j,l-1}.
\]
We denote
\[
\lfloor w \rfloor_{i,k;j,l}=w_{i,k;j,l}+w_{i,k-1;j,l-1}-w_{i,k-1;j,l}-w_{i,k;j,l-1}, \forall (i,k;j,l)\in {\cal Z}_2.
\]

We define the mapping from the the trial space  $U_{\cal P}^r$ to the test space $V_{\cal P'}$ as :
\begin{equation}\label{piv}
\Pi v_{\cal P}=\sum_{(i,k,j,l)\in {\cal Z}_1} (\Pi v_{\cal P})_{i,k;j,l}\psi_{i,k;j,l}\in V_{\cal P'}, v_{\cal P}\in U_{\cal P}^r,
\end{equation}
where the coefficients $(\Pi v_{\cal P})_{i,k;j,l}$ are determined by the constraints
\begin{equation}\label{mapcond}
\lfloor \Pi v_{\cal P} \rfloor_{i,k;j,l}=A_{i,k}^xA_{j,l}^y \frac{\partial^2 v_{\cal P}}{\partial x\partial y}(g_{i,k}^x, g^y_{j,l}), \forall (i,k;j,l)\in {\cal Z}_1.
\end{equation}
%

\begin{remark}We can not suppose a priori that \eqref{mapcond} holds for all $(i,k;j,l)\in {\cal Z}_2$, because $\#{\cal Z}_2=mnr^2$ which is greater than
$\dim V_{\cal P'}=(mr-1)(nr-1)$. However, $\#{\cal Z}_1=(mr-1)(nr-1)$, so we constraint \eqref{mapcond} only for the indices in ${\cal Z}_1$.
We present the Gauss points corresponding the index sets ${\cal Z}_1$ and ${\cal Z}_2$ in Figure  \ref{z1z2} ($r=2$). In this figure, the gauss points
corresponding to ${\cal Z}_1$ are depicted with `$\bullet$' and the Gauss points corresponding to ${\cal Z}_2\setminus {\cal Z}_1$ are depicted with
heavy '$\star$'.
\end{remark}

\medskip

We next explain how to determine $\Pi v_{\cal P}$ by \eqref{mapcond}.
 For any given $v_{\cal P}$, $\Pi v_{\cal P}=0$ on the boundary control volumes,
 namely, $(\Pi v_{\cal P})_{i,k;j,l}=0, \forall (i,k;j,l)\in {\cal Z}_0\setminus {\cal Z}_1$. In the first step, let $(i,k;j,l)=(1,1;1,1)$ in \eqref{mapcond} to obtain
 \[
 (\Pi v_{\cal P})_{1,1;1,1}=A_{1,1}^xA_{1,1}^y \frac{\partial^2 v_{\cal P}}{\partial x\partial y}(g_{1,1}^x, g^y_{1,1}).
 \]
 Once $(\Pi v_{\cal P})_{1,1;1,1}$ is obtained, we then let $(j,l)\in \bZ_n\times\bZ_{r_j}\setminus \{(1,1)\}$ in \eqref{mapcond} to obtain
\[
 (\Pi v_{\cal P})_{1,1;j,l}=(\Pi v_{\cal P})_{1,1;j,l-1}+A_{1,1}^xA_{j,l}^y \frac{\partial^2 v_{\cal P}}{\partial x\partial y}(g_{1,1}^x, g^y_{j,l}),
\]
where we  again used the fact $\Pi v_{\cal P}=0$ on the boundary volumes.
That is, $(\Pi v_{\cal P})_{1,1;j,l}$ can be successively calculated for all $(j,l)\in {\bZ}_n\times\bZ_{r_j}$.
In the same way, we can  calculate  $(\Pi v_{\cal P})_{i,k;1,1}$ for all $(i,k)\in \bZ_m\times\bZ_{r_i}$
 successively.
In the second step, we use \eqref{mapcond} to compute all $(\Pi v_{\cal P})_{1,2;j,l}$ and all $(\Pi v_{\cal P})_{i,k;1,2}$
for all $(j,l)\in \bZ_n\times\bZ_{r_j}\setminus \{(1,1),{1,2}\}$ and all $(i,k)\in \bZ_m\times\bZ_{r_i}\setminus \{(1,1),{1,2}\}$.
In the $p$-th step, $p=2,\ldots, \min(mr-1,nr-1)$, we use \eqref{mapcond} to compute all $(\Pi v_{\cal P})_{i_p,k_p;j,l}$ and all $(\Pi v_{\cal P})_{i,k;i_p,k_p}$
for all $(j,l)\in \in \bZ_n\times\bZ_{r_j}\setminus \{(1,1),{1,2}, \ldots (i_p,k_p-1)\}$ and all $(i,k)\in \bZ_m\times\bZ_{r_i}\setminus \{(1,1),{1,2},\ldots,(i_p,j_p-1\}$.
Finally, we obtain   $(\Pi v_{\cal P})_{i,k;j,l}$ for all $(i,k;j,l)\in {\cal Z}_1$.

Next, we show that
\eqref{mapcond} holds for all $(i,k;j,l)\in {\cal Z}_2.$  In fact,
since $v_{\cal P}=0$ on the boundary $\partial\Omega$,
\[
\frac{\partial v_{\cal P}}{\partial x}(x, c)=\frac{\partial v_{\cal P}}{\partial x}(x, d)=0, \forall x\in[a,b],
\]
then for all $(i,k)\in \bZ_{m,r}$,
\[
\sum_{(j,l)\in \bZ_{n,r}} A_{j,l}^y\frac{\partial^2 v_{\cal P}}{\partial x\partial y}(g_{i,k}^x, g^y_{j,l})=\int_c^d\frac{\partial^2 v_{\cal P}}{\partial x\partial y}(g_{i,k}^x, y)dy=0.
\]
In other words,
\[
A_{n,r}^y\frac{\partial^2 v_{\cal P}}{\partial x\partial y}(g_{i,k}^x, g^y_{n,r})=-\sum_{(j,l)\in \bZ_{n,r_j^y}} A_{j,l}^y\frac{\partial^2 v_{\cal P}}{\partial x\partial y}(g_{i,k}^x, g^y_{j,l}).
\]
Consequently,
\begin{eqnarray*}
A_{i,k}^xA_{n,r}^y\frac{\partial^2 v_{\cal P}}{\partial x\partial y}(g_{i,k}^x, g^y_{n,r})&=&-\sum_{(j,l)\in \bZ_{n,r_j^y}} \lfloor \Pi v_{\cal P}\rfloor_{i,k;j,l}\\
&=&-\sum_{(j,l)\in \bZ_{n,r_j^y}} [[(\Pi v_{\cal P})_{i,k;j,l}]^x]^y\\
&=&-[\Pi v_{\cal P}]^x_{i,k;n,r}=\lfloor\Pi v_{\cal P}\rfloor_{i,k;n,r}.
\end{eqnarray*}
By the same reasoning, for all $(j,l)\in \bZ_{n,r}$,
\[
\lfloor\Pi v_{\cal P}\rfloor_{m,r;j,l}=A_{m,r}^xA_{j,l}^y\frac{\partial^2 v_{\cal P}}{\partial x\partial y}(g_{m,r}^x, g^y_{j,l}).
\]
Namely, \eqref{mapcond} holds for all $(i,k;j,l)\in {\cal Z}_2$.


\medskip

\begin{lemma}\label{lm:ub}If  ${\cal P}$ is shape regular, then for any $v_{\cal P}\in U_{\cal P}^r$,
\begin{equation}\label{ub}
|\Pi v_{\cal P}|_{\cal P'}\lesssim |v_{\cal P}|_1,
\end{equation}
where the hidden constant depends only on $r$.
\end{lemma}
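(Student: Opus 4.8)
The plan is to exploit the fact that $\Pi v_{\cal P}$ is piecewise constant, so that the seminorm collapses to a sum of squared coefficient jumps, and then to write each jump in closed form from the defining constraints \eqref{mapcond}. Since each dual edge $E$ is a segment whose diameter $h_E$ equals its length, the factor $h_E^{-1}$ cancels the edge length in $\int_E[\Pi v_{\cal P}]^2ds$, giving $|\Pi v_{\cal P}|_{\cal P'}^2 = \sum_{(i,k;j,l)\in{\cal Z}_3}([\Pi v_{\cal P}]^x_{i,k;j,l})^2 + \sum_{(i,k;j,l)\in{\cal Z}_4}([\Pi v_{\cal P}]^y_{i,k;j,l})^2$. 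By the $x$--$y$ symmetry of the construction it suffices to bound the first ($x$-jump) sum by $|v_{\cal P}|_1^2$.

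The crucial step is a closed-form representation of $[\Pi v_{\cal P}]^x_{i,k;j,l}$. Writing $\lfloor\Pi v_{\cal P}\rfloor=[[\Pi v_{\cal P}]^x]^y$ and telescoping the constraint \eqref{mapcond} in the $y$-index from the bottom boundary (where $\Pi v_{\cal P}=0$) upward, I would obtain $[\Pi v_{\cal P}]^x_{i,k;j,l}=A^x_{i,k}\sum A^y_{j',l'}\frac{\partial^2 v_{\cal P}}{\partial x\partial y}(g^x_{i,k},g^y_{j',l'})$, the sum being over all $(j',l')$ whose Gauss point $g^y_{j',l'}$ lies at or below $g^y_{j,l}$. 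I would then split this into the completed element-rows $j'<j$ and the partial row $j'=j$, $l'\le l$. Since $\frac{\partial^2 v_{\cal P}}{\partial x\partial y}(g^x_{i,k},\cdot)$ is a polynomial of degree $\le r-1$ in $y$ on each element, the full-row sums are \emph{exact} $r$-point Gauss quadratures of $\int_{y_{j'-1}}^{y_{j'}}\frac{\partial^2 v_{\cal P}}{\partial x\partial y}(g^x_{i,k},y)\,dy=\frac{\partial v_{\cal P}}{\partial x}(g^x_{i,k},y_{j'})-\frac{\partial v_{\cal P}}{\partial x}(g^x_{i,k},y_{j'-1})$; these telescope, and using $\frac{\partial v_{\cal P}}{\partial x}(g^x_{i,k},c)=0$ (from $v_{\cal P}|_{\partial\Omega}=0$) I would reach $[\Pi v_{\cal P}]^x_{i,k;j,l}=A^x_{i,k}\big(\frac{\partial v_{\cal P}}{\partial x}(g^x_{i,k},y_{j-1})+\sum_{l'=1}^{l}A^y_{j,l'}\frac{\partial^2 v_{\cal P}}{\partial x\partial y}(g^x_{i,k},g^y_{j,l'})\big)$.

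With this identity the estimate becomes routine, and I would treat the two terms separately after $(a+b)^2\le 2a^2+2b^2$. For the trace term, summing $(A^x_{i,k})^2[\frac{\partial v_{\cal P}}{\partial x}(g^x_{i,k},y_{j-1})]^2$ over $k$ is, after using $A^x_{i,k}\sim h^x_i$ and exactness of the one-dimensional Gauss rule on the degree-$(2r-2)$ integrand $[\frac{\partial v_{\cal P}}{\partial x}(\cdot,y_{j-1})]^2$, comparable to $h^x_i\int_{x_{i-1}}^{x_i}[\frac{\partial v_{\cal P}}{\partial x}(x,y_{j-1})]^2dx$, a scaled trace on the bottom edge of $\tau_{i;j}$; the scaled trace inequality, the inverse inequality, and shape regularity ($h^x_i\sim h^y_j\sim h_\tau$) then bound the sum over all elements by $\sum_\tau|v_{\cal P}|_{1,\tau}^2=|v_{\cal P}|_1^2$. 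For the partial-sum term, Cauchy--Schwarz with $\sum_{l'}A^y_{j,l'}\le h^y_j$ followed by exactness of the tensor-product Gauss rule on the degree-$(2r-2,2r-2)$ integrand $(\frac{\partial^2 v_{\cal P}}{\partial x\partial y})^2$ converts it into $\sum_{i,j}h_\tau^2\int_{\tau_{i;j}}(\frac{\partial^2 v_{\cal P}}{\partial x\partial y})^2$, which the inverse inequality again controls by $|v_{\cal P}|_1^2$. Summing the $x$- and $y$-contributions yields \eqref{ub}. The main obstacle is establishing the closed-form jump representation — in particular recognizing that the telescoped constraint is exactly a composite Gauss quadrature that is exact on full element rows and collapses to a boundary trace through the homogeneous boundary condition; everything afterward is bookkeeping with standard scaling, trace, and inverse estimates, all with $r$-dependent constants.
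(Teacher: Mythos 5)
Your proposal is correct and follows essentially the same route as the paper: the seminorm is collapsed to the coefficient jumps, the jump $[\Pi v_{\cal P}]^x_{i,k;j,l}$ is written in the same closed form by telescoping \eqref{mapcond} through exact Gauss quadrature and the boundary condition, and the two resulting terms are bounded by Gauss-weight positivity/exactness and inverse inequalities. The only cosmetic difference is that you control the boundary-value term $\frac{\partial v_{\cal P}}{\partial x}(g^x_{i,k},y_{j-1})$ via a scaled trace inequality on the element edge after summing in $x$, whereas the paper uses a one-dimensional $L^\infty$--$L^2$ inverse inequality in $y$ at the fixed Gauss point; both yield the same bound.
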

\begin{proof}By the definition of the semi-norm $|\cdot|_{\cal P'}$, we have
\begin{equation}\label{def'}
|\Pi v_{\cal P}|^2_{\cal P'}=\sum_{(i,k;j,l)\in {\cal Z}_3} \left([(\Pi v_{\cal P})_{i,k;j,l}]^x\right)^2+\sum_{(i,k;j,l)\in {\cal Z}_4} \left([(\Pi v_{\cal P})_{i,k;j,l}]^y\right)^2.
\end{equation}
We next estimate $[(\Pi v_{\cal P})_{i,k;j,l}]^x$.
For all given $(i,k;j)\in\bZ_m\times\bZ_r\times\bZ_n$, the function $\frac{\partial^2 v_{\cal P}}{\partial x\partial y}(g_{i,k}^x, \cdot)$ is a polynomial of degree $r-1$ in the
interval $[y_{j-1}, y_j]$, therefore,
\begin{eqnarray*}
\sum_{l\in \bZ_r} A_{j,l}^y \frac{\partial^2 v_{\cal P}}{\partial x\partial y}(g_{i,k}^x, g^y_{j,l})&=&\int_{y_{j-1}}^{y_{j}}\frac{\partial^2 v_{\cal P}}{\partial x\partial y}(g_{i,k}^x, y)dy\\
&=&\frac{\partial v_{\cal P}}{\partial x}(g_{i,k}^x, y_{j})-\frac{\partial v_{\cal P}}{\partial x}(g_{i,k}^x, y_{j-1}).
\end{eqnarray*}
Noticing that
\[
\frac{\partial v_{\cal P}}{\partial x}(g_{i,k}^x, c)=0,
\]
we have
\begin{eqnarray*}
\sum_{j'\in \bZ_j}\sum_{l\in \bZ_r} A_{j',l}^y \frac{\partial^2 v_{\cal P}}{\partial x\partial y}(g_{i,k}^x, g^y_{j',l})
&=&\frac{\partial v_{\cal P}}{\partial x}(g_{i,k}^x, y_{j}).
\end{eqnarray*}
Consequently, for all $(i,k;j,l)\in {\cal Z}_3$
\begin{eqnarray}\nonumber
[\Pi v_{\cal P}]^x_{i,k;j,l}&=&\sum_{(j',l')\in\bZ_j\times\bZ_{r_{j,l}}}\lfloor \Pi v_{\cal P}\rfloor_{i,k;j',l'}\\ \nonumber
&=&A_{i,k}^x\sum_{(j',l')\in\bZ_j\times\bZ_{r_{j,l}}}A_{j',l'}^y \frac{\partial^2 v_{\cal P}}{\partial x\partial y}(g_{i,k}^x, g^y_{j',l'})\\ \label{estt}
&=&A_{i,k}^x\left(\frac{\partial v_{\cal P}}{\partial x}(g_{i,k}^x, y_{j-1})+\sum_{l'\in\bZ_l}A_{j,l'}^y \frac{\partial^2 v_{\cal P}}{\partial x\partial y}(g_{i,k}^x, g^y_{j,l'})\right),
\end{eqnarray}
where $r_{j,l}=r$, if $j'<j$ and  $r_{j,l}=l$, if $j'=j$.

We first estimate  $\frac{\partial v_{\cal P}}{\partial x}(g_{i,k}^x, y_{j-1})$. Since $\frac{\partial v_{\cal P}}{\partial x}(g_{i,k}^x, \cdot)\in \bP_{r-1}(y_{j-1},y_j)$, by the inverse inequality, there holds
\[
\left\|\frac{\partial v_{\cal P}}{\partial x}(g_{i,k}^x, \cdot)\right\|_{L_\infty([y_{j-1},y_j])}
\lesssim h^{-\frac12}\left\|\frac{\partial v_{\cal P}}{\partial x}(g_{i,k}^x, \cdot)\right\|_{L_2([y_{j-1},y_j])},
\]
where the hidden constant depends only on $r$. Then
\begin{equation}\label{est1}
\left|\frac{\partial v_{\cal P}}{\partial x}(g_{i,k}^x, y_{j-1})\right| \lesssim h^{-\frac12}\left\|\frac{\partial v_{\cal P}}{\partial x}(g_{i,k}^x, \cdot)\right\|_{L_2([y_{j-1},y_j])}.
\end{equation}
We now estimate the second term of \eqref{estt}. By the Cauchy-Schwartz inequality,
\begin{eqnarray*}
\left(\sum_{l'\in\bZ_l}A_{j,l'}^y \frac{\partial^2 v_{\cal P}}{\partial x\partial y}(g_{i,k}^x, g^y_{j,l'})\right)^2
&\le&
r\sum_{l'\in\bZ_r} (A_{j,l'}^y)^2 \left(\frac{\partial^2 v_{\cal P}}{\partial x\partial y}(g_{i,k}^x, g^y_{j,l'})\right)^2 \\
&\le&
rh_j^y\sum_{l'\in\bZ_r} A_{j,l'}^y \left(\frac{\partial^2 v_{\cal P}}{\partial x\partial y}(g_{i,k}^x, g^y_{j,l'})\right)^2\\
&=&
rh_j^y\int_{y_{j-1}}^{y_j}  \left(\frac{\partial^2 v_{\cal P}}{\partial x\partial y}(g_{i,k}^x, y)\right)^2 dy.
\end{eqnarray*}
Again since $\frac{\partial v_{\cal P}}{\partial x}(g_{i,k}^x, \cdot)\in \bP_{r-1}{(y_{j-1}, y_j)}$, we have the inverse inequality
\[
 \left\|\frac{\partial^2 v_{\cal P}}{\partial y\partial x}(g_{i,k}^x, \cdot)\right\|_{L^2(y_{j-1}, y_j)} \lesssim h^{-1} \left\|\frac{\partial v_{\cal P}}{\partial x}(g_{i,k}^x, \cdot)\right\|_{L^2(y_{j-1},y_j)},
\]
where the hidden constant again depends only on $r$. Consequently,
\begin{equation}\label{est2}
\left(\sum_{l'\in\bZ_l}A_{j,l'}^y \frac{\partial^2 v_{\cal P}}{\partial x\partial y}(g_{i,k}^x, g^y_{j,l'})\right)^2\lesssim h^{-1}\int_{y_{j-1}}^{y_j} \left(\frac{\partial v_{\cal P}}{\partial x}(g_{i,k}^x, \cdot)\right)^2dy.
\end{equation}
Substituting \eqref{est1} and \eqref{est2} into \eqref{estt} and noticing the fact $A_{i,k}^xh^{-1}\le 1$, we obtain that for all $(i,k;j,l)\in {\cal Z}_3$
\[
\left([\Pi v_{\cal P}]^x_{i,k;j,l}\right)^2\lesssim A_{i,k}^x\int_{y_{j-1}}^{y_j}  \left(\frac{\partial v_{\cal P}}{\partial x}(g_{i,k}^x, y)\right)^2 dy.
\]
Consequently,
\[
\sum_{(i,k;j,l)\in {\cal Z}_3} \left([(\Pi v_{\cal P})_{i,k;j,l}]^x\right)^2\lesssim \sum_{(i,k)\in \bZ_{m,r}}A_{i,k}^x\int_{c}^{d}  \left(\frac{\partial v_{\cal P}}{\partial x}(g_{i,k}^x, y)\right)^2 dy.
\]
Since the function $\int_{c}^{d}  \left(\frac{\partial v_{\cal P}}{\partial x}(\cdot, y)\right)^2 dy\in \bP_{2r-2}(x_{i-1},x_i)$ for all $i\in \bZ_m$, we obtain
\begin{equation}\label{pix}
\sum_{(i,k;j,l)\in {\cal Z}_3} \left([(\Pi v_{\cal P})_{i,k;j,l}]^x\right)^2\lesssim \int_a^b\int_{c}^{d}  \left(\frac{\partial v_{\cal P}}{\partial x}(x, y)\right)^2 dydx.
\end{equation}
By the same arguments,
\begin{equation}\label{piy}
\sum_{(i,k;j,l)\in {\cal Z}_4} \left([(\Pi v_{\cal P})_{i,k;j,l}]^y\right)^2\lesssim \int_a^b\int_{c}^{d}  \left(\frac{\partial v_{\cal P}}{\partial y}(x, y)\right)^2 dxdy.
\end{equation}
Therefore, by \eqref{def'},
\begin{eqnarray*}
|\Pi v_{\cal P}|^2_{\cal P'}&\lesssim& \int_a^b\int_c^d  \left(\frac{\partial v_{\cal P}}{\partial x}(x, y)\right)^2+ \left(\frac{\partial v_{\cal P}}{\partial y}(x, y)\right)^2 dxdy,
\end{eqnarray*}
from which the inequality \eqref{ub} follows.
\end{proof}

\bigskip

\begin{lemma}\label{le:infsup}If $\alpha$ is piecewise constant with respect to  ${\cal P}$, then
\begin{equation}\label{infsup1}
   a_{\cal P} (v_{\cal P} ,\Pi v_{\cal P} )\ge \alpha_0|v_{\cal P} |_1^2,\quad \forall v_{\cal P}\in U^r_{\cal P}.
\end{equation}
\end{lemma}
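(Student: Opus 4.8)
The plan is to split the form into its two directional pieces $a_{\cal P}(v_{\cal P},\Pi v_{\cal P})=S_x+S_y$, where $S_x$ collects the ${\cal Z}_3$ (vertical-edge) terms and $S_y$ the ${\cal Z}_4$ terms, and to show $S_x\ge\alpha_0\int_\Omega(\partial_x v_{\cal P})^2$ and, symmetrically, $S_y\ge\alpha_0\int_\Omega(\partial_y v_{\cal P})^2$; adding these gives the claim. I would treat $S_x$ and infer $S_y$ by exchanging the roles of $x$ and $y$. The idea is to freeze a vertical Gauss line $x=g^x_{i,k}$, read the edge contributions along it as a one-dimensional finite volume form acting on $\phi(y):=\partial_x v_{\cal P}(g^x_{i,k},y)$, prove a one-dimensional coercivity estimate on that line, and then reassemble the lines using the exactness of the $x$-Gauss quadrature.

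For the one-dimensional step I would substitute the jump identity established inside the proof of Lemma \ref{lm:ub}, namely $[\Pi v_{\cal P}]^x_{i,k;j,l}=A^x_{i,k}\sum_{(j',l')\le(j,l)}A^y_{j',l'}\phi'(g^y_{j',l'})$, which exhibits the $x$-jump as a cumulative ($y$-ordered) sum of the Gauss-weighted values of $\phi'=\partial^2_{xy}v_{\cal P}(g^x_{i,k},\cdot)$. With $\beta(y):=\alpha(g^x_{i,k},y)$ (piecewise constant in $y$ with $\beta\ge\alpha_0$, since $\alpha$ is ${\cal P}$-piecewise constant), the $(i,k)$ contribution to $S_x$ is $A^x_{i,k}\sum_{j,l}\Psi_{j,l}\int_{g^y_{j,l}}^{g^y_{j,l+1}}\beta\phi\,dy$ with $\Psi_{j,l}$ those cumulative sums. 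A discrete summation by parts (Abel summation) interchanges the cumulation with the segment integrals and collapses this to the plain Gauss-quadrature sum $\sum_{j,l}A^y_{j,l}\phi'(g^y_{j,l})\Phi(g^y_{j,l})$, where $\Phi(y):=\int_y^d\beta\phi\,ds$ satisfies $\Phi'=-\beta\phi$; the boundary conditions $\phi(c)=\phi(d)=0$ (a consequence of $v_{\cal P}=0$ on $\partial\Omega$, already used above) make the endpoint contributions vanish. On each $y$-element $\phi'$ has degree $\le r-1$ and $\Phi$ degree $\le r+1$, so their product has degree $\le2r$ and the $r$-node Gauss rule reproduces $\int_c^d\phi'\Phi\,dy$ up to element quadrature errors; integration by parts together with $\phi(c)=\phi(d)=0$ gives $\int_c^d\phi'\Phi\,dy=\int_c^d\beta\phi^2\,dy$.

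The crux, and the step I expect to be the main obstacle, is controlling the sign of the leftover quadrature error. Because the $r$-node rule is exact through degree $2r-1$, the element error depends only on the degree-$2r$ coefficient of $\phi'\Phi$; a short computation shows this coefficient equals $-\tfrac{r}{r+1}\beta_j b^2\le0$, where $b$ is the leading coefficient of $\phi$ on that element. Since the Gauss nodes are the roots of the degree-$r$ Legendre polynomial, the degree-$2r$ error equals minus that coefficient times $\int\pi_r^2\ge0$, hence each element error is nonnegative and $\sum_{j,l}A^y_{j,l}\phi'(g^y_{j,l})\Phi(g^y_{j,l})\ge\int_c^d\beta\phi^2\,dy\ge\alpha_0\int_c^d\phi^2\,dy$, which is the one-dimensional coercivity on the line $x=g^x_{i,k}$. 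Finally I would sum over the lines: $S_x\ge\alpha_0\sum_{i,k}A^x_{i,k}\int_c^d(\partial_x v_{\cal P}(g^x_{i,k},y))^2\,dy$, and since $x\mapsto\int_c^d(\partial_x v_{\cal P}(x,y))^2\,dy$ is a polynomial of degree $\le2r-2$ on each $x$-element, the $x$-Gauss quadrature is exact and returns $\int_\Omega(\partial_x v_{\cal P})^2$. Adding the symmetric bound for $S_y$ yields $a_{\cal P}(v_{\cal P},\Pi v_{\cal P})\ge\alpha_0|v_{\cal P}|_1^2$. Besides the sign analysis, the only delicate point is the boundary bookkeeping in the Abel summation (the special ranges $r^y_j$ and the vanishing of jumps on boundary control volumes), which the vanishing of $\phi$ at $c,d$ ultimately absorbs.
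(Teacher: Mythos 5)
Your proposal is correct and follows essentially the same route as the paper: after splitting into the $x$- and $y$-parts, your Abel summation turning the cumulative jump sums into a Gauss quadrature of $\phi'\Phi$ is exactly the paper's identity $a_{\cal P}(v_{\cal P},\Pi v_{\cal P})=-\sum_{{\cal Z}_2}\lfloor\Pi v_{\cal P}\rfloor(v^1+v^2)$ with the antiderivative taken from the other endpoint (the constant $\int_c^d\beta\phi$ by which $\Phi$ and $-v^1$ differ drops out since $\sum A^y_{j,l}\phi'(g^y_{j,l})=\phi(d)-\phi(c)=0$). The sign analysis via the degree-$2r$ leading coefficient $\mp\tfrac{r}{r+1}\beta b^2$ and the final exactness of the $x$-quadrature on the degree-$(2r-2)$ integrand coincide with the paper's treatment of $Res$ and of \eqref{apvp}.
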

\begin{proof}We define two functions for all  $(x,y)\in \Omega$ by
\[
v^1(x,y)=\int_c^y \alpha(x,y')\frac{\partial v_{\cal P}}{\partial x}(x,y') dy',\quad v^2(x,y)=\int_a^x \alpha(x',y)\frac{\partial v_{\cal P}}{\partial y}(x',y) dx'.
\]
A straightforward calculation yields that
\begin{eqnarray*}
   a_{\cal P}(v_{\cal P},\Pi v_{\cal P})
  &=& -\sum_{(i,k;j,l)\in {\cal Z}_2}\lfloor \Pi v_{\cal P} \rfloor_{i,k;j,l}\left( v^1(g^x_{i,k}, g^y_{j,l})+ v^2(g^x_{i,k}, g^y_{j,l})\right).\\
\end{eqnarray*}
Noticing that \eqref{mapcond} holds for all $(i,k;j,l)\in {\cal Z}_2$, we obtain
\begin{eqnarray*}
   a_{\cal P}(v_{\cal P},\Pi v_{\cal P})
    &=&I_1+I_2,
\end{eqnarray*}
where
\[
I_1=-\sum_{(i,k;j,l)\in {\cal Z}_2}A_{i,k}^xA_{j,l}^y \frac{\partial^2 v_{\cal P}}{\partial x\partial y}(g_{i,k}^x, g^y_{j,l}) v^1(g^x_{i,k}, g^y_{j,l}),
\]
and
\[
I_2=-\sum_{(i,k;j,l)\in {\cal Z}_2}A_{i,k}^xA_{j,l}^y \frac{\partial^2 v_{\cal P}}{\partial x\partial y}(g_{i,k}^x, g^y_{j,l}) v^2(g^x_{i,k}, g^y_{j,l}).
\]
We next estimate $I_1$. To this end, for all $(i,k;j)\in \bZ_m\times\bZ_r\times\bZ_n$, we denote by
\begin{align*}
Err_j(g_{i,k}^x)=\int_{y_{j-1}}^{y_j}\frac{\partial^2 v_{\cal P}}{\partial x\partial y}(g_{i,k}^x,y)v^1(g^x_{i,k}, y)dy-\sum_{l\in \bZ_r} A_{j,l}^y \frac{\partial^2 v_{\cal P}}{\partial x\partial y}(g_{i,k}^x, g^y_{j,l})v^1(g^x_{i,k}, g^y_{j,l})
\end{align*}
the error of the Gauss quadrature of the function $\frac{\partial^2 v_{\cal P}}{\partial x\partial y}(g_{i,k}^x,y)v^1(g^x_{i,k}, \cdot)$ in the interval $[y_{j-1},y_j]$.
Moreover, let
\[
Res=\sum_{(i,k;j)\in \bZ_m\times\bZ_r\times\bZ_n}A_{i,k}^x Err_j(g_{i,k}^x).
\]
With this notation,
\begin{eqnarray*}
I_1&=&-\sum_{(i,k)\in \bZ_m\times\bZ_r}A_{i,k}^x\int_c^d\frac{\partial^2 v_{\cal P}}{\partial x\partial y}(g_{i,k}^x,y)v^1(g^x_{i,k}, y)dy+Res\\
&=&\sum_{(i,k)\in \bZ_m\times\bZ_r}A_{i,k}^x\int_c^d\alpha(g^x_{i,k},y)\left(\frac{\partial v_{\cal P}}{\partial x}(g^x_{i,k}, y)\right)^2dy+Res.
\end{eqnarray*}
Since $\alpha$ is a constant in each $\tau\in {\cal P}$,
\[
\int_c^d\alpha(\cdot,y)\left(\frac{\partial v_{\cal P}}{\partial x}(\cdot, y)\right)^2dy\in \bP_{2r-2}([x_{i-1}, x_i]), \forall  i\in \bZ_m,
\]
then
\begin{equation}\label{apvp}
I_1=\int_a^b\int_c^d\alpha(x,y)\left(\frac{\partial v_{\cal P}}{\partial x}(x, y)\right)^2dydx+Res.
\end{equation}
We next estimate $Res$. Using the fact that $\alpha$ is piecewise constant with respect to ${\cal P}$, we have
\[
\frac{\partial^2 v_{\cal P}}{\partial x\partial y}(g_{i,k}^x,\cdot)v^1(g^x_{i,k}, \cdot)\in \bP_{2r}([y_{j-1},y_j]), \forall j\in \bZ_n.
\]
Then for all $y\in [y_{j-1},y_j]$,
\[
\frac{\partial^{(2r)}}{\partial y^{(2r)}}\left(\frac{\partial^2 v_{\cal P}}{\partial x\partial y}(g_{i,k}^x,y)v^1(g^x_{i,k}, y)\right)=\alpha(g_{i,k}^x,y) (2r)!\times\frac{r}{r+1}\times a_{i,k,j}^2 \ge 0.
\]
where $a_{i,k;j}$ is the leading coefficient of the polynomial $\frac{\partial v_{\cal P}}{\partial x}(g_{i,k}^x,y)$ in $[y_{j-1}, y_j]$.
Consequently,
\[
Err_j(g_{i,k}^x)\ge 0, \forall (i,k)\in \bZ_m\times\bZ_r\times\bZ_n,
\]
and thus \[Res\ge 0.\]
In summary,
\begin{equation}\label{y}
I_1\ge \alpha_0\int_a^b\int_c^d\left(\frac{\partial v_{\cal P}}{\partial x}(x, y)\right)^2dydx.
\end{equation}
By the same arguments,
\begin{equation}\label{x}
I_2\ge \alpha_0\int_c^d\int_a^b\left(\frac{\partial v_{\cal P}}{\partial y}(x, y)\right)^2dxdy.
\end{equation}
Combining \eqref{y} and\eqref{x}, the inequality \eqref{infsup1} follows.
\end{proof}

\bigskip
Summarizing the above two lemmas, we obtain the following inf-sup property.

\begin{theorem}Let ${\cal P}$ be a shape regular and quasi-uniform partition with the meshsize $h$.
If the coefficient $\alpha$ is  piecewise constant, then the inf-sup property
\begin{equation}\label{infsup}
\inf_{v_{\cal P}\in U^r_{\cal P}}\sup_{w_{\cal P'} \in V_{\cal P'} }
\frac{a_{\cal P} (v_{\cal P} ,w_{\cal P'} )}{|v_{\cal P} |_1 |w_{\cal P'}|_{\cal P'}}\gtrsim 1
\end{equation}
holds for all $h$. If $\alpha$ is piecewise continuous, then \eqref{infsup} holds when the meshsize $h$ is sufficiently small.
\end{theorem}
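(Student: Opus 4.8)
The plan is to prove the inf-sup condition by producing, for each $v_{\cal P}\in U^r_{\cal P}$, an explicit test function realizing the supremum up to a constant. The canonical choice is $w_{\cal P'}=\Pi v_{\cal P}$, the image of $v_{\cal P}$ under the global projection \eqref{piv}. For piecewise constant $\alpha$ the two preceding lemmas do all the work: bounding the supremum from below by this single choice gives
\begin{equation*}
\sup_{w_{\cal P'}\in V_{\cal P'}}\frac{a_{\cal P}(v_{\cal P},w_{\cal P'})}{|v_{\cal P}|_1\,|w_{\cal P'}|_{\cal P'}}\ge \frac{a_{\cal P}(v_{\cal P},\Pi v_{\cal P})}{|v_{\cal P}|_1\,|\Pi v_{\cal P}|_{\cal P'}}.
\end{equation*}
By Lemma \ref{le:infsup} the numerator is at least $\alpha_0|v_{\cal P}|_1^2$, while Lemma \ref{lm:ub} bounds the denominator by $C|v_{\cal P}|_1^2$ with $C$ depending only on $r$ and the shape-regularity. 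Dividing yields the uniform lower bound $\alpha_0/C\gtrsim 1$, independent of $h$, which is \eqref{infsup} in the piecewise constant case.

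For piecewise continuous $\alpha$ I would argue by perturbation off a piecewise constant coefficient. Let $\bar\alpha$ be the piecewise constant function equal, on each $\tau\in{\cal P}$, to the value of $\alpha$ at the center of $\tau$; since $\alpha$ is continuous on each piece, $\bar\alpha\ge\alpha_0$, and because ${\cal P}$ is quasi-uniform, $\|\alpha-\bar\alpha\|_{L^\infty}\to 0$ as $h\to 0$. As $a_{\cal P}(\cdot,\cdot)$ is linear in the coefficient and the projection $\Pi$ is built from the data in \eqref{mapcond} without reference to $\alpha$, I split (writing $a_{\cal P}^{\beta}$ for the form \eqref{biform} with $\alpha$ replaced by $\beta$)
\begin{equation*}
a_{\cal P}^{\alpha}(v_{\cal P},\Pi v_{\cal P})=a_{\cal P}^{\bar\alpha}(v_{\cal P},\Pi v_{\cal P})+a_{\cal P}^{\alpha-\bar\alpha}(v_{\cal P},\Pi v_{\cal P}).
\end{equation*}
Lemma \ref{le:infsup} applies to the first term with coefficient $\bar\alpha$ and bounds it below by $\alpha_0|v_{\cal P}|_1^2$. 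For the second term I invoke the continuity estimate of Theorem \ref{thm:continuity}, noting that the constant there scales linearly with the $L^\infty$ norm of the coefficient; hence it is at most $C'\|\alpha-\bar\alpha\|_\infty\,|v_{\cal P}|_{\cal P}\,|\Pi v_{\cal P}|_{\cal P'}$. An element-wise inverse inequality gives $|v_{\cal P}|_{\cal P}\lesssim|v_{\cal P}|_1$, and Lemma \ref{lm:ub} gives $|\Pi v_{\cal P}|_{\cal P'}\lesssim|v_{\cal P}|_1$, so the perturbation is bounded by $C''\|\alpha-\bar\alpha\|_\infty|v_{\cal P}|_1^2$. Choosing $h$ small enough that $C''\|\alpha-\bar\alpha\|_\infty\le \tfrac{\alpha_0}{2}$ yields $a_{\cal P}^{\alpha}(v_{\cal P},\Pi v_{\cal P})\ge\tfrac{\alpha_0}{2}|v_{\cal P}|_1^2$, after which the same division as above delivers \eqref{infsup}.

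The routine part is the piecewise constant case, where the two lemmas combine verbatim. The delicate point is the perturbation step. One must verify that $\Pi$ is genuinely coefficient-independent, so that the splitting of the bilinear form is legitimate, and that the continuity constant really scales with $\|\alpha-\bar\alpha\|_\infty$ rather than with $\|\alpha\|_\infty$ (this is visible in the first line of the proof of Theorem \ref{thm:continuity}, where $\|\alpha\|_\infty$ is pulled out directly). The smallness of $\|\alpha-\bar\alpha\|_\infty$ is precisely what forces the qualifier ``for sufficiently small $h$'' in the continuous case, and is exactly why the uniform-in-$h$ statement survives only when $\alpha$ is itself piecewise constant, in which case $\bar\alpha=\alpha$ and no perturbation term appears.
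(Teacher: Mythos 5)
Your proposal is correct and follows essentially the same route as the paper: the piecewise constant case combines Lemma \ref{le:infsup} and Lemma \ref{lm:ub} with the single test function $\Pi v_{\cal P}$, and the piecewise continuous case is handled by the same perturbation off a piecewise constant $\bar\alpha$ (the paper takes $\bar\alpha$ to be the elementwise average rather than the center value, an immaterial difference) with the difference of bilinear forms controlled by the modulus of continuity as in Theorem \ref{thm:continuity}.
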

\begin{proof}When $\alpha$ is piecewise constant, by \eqref{infsup1} and \eqref{ub}, for any $v_{\cal P}\in U^r_{\cal P}$,
\[
\sup_{w_{\cal P'}\in V_{\cal P'}} \frac{a_{\cal P}(v_{\cal P},w_{\cal P'})}{|w_{\cal P'}|_{\cal P'}}
\ge \frac{a_{\cal P}(v_{\cal P},\Pi v_{\cal P})}{|\Pi v_{\cal P}|_{\cal P'}}\ge \frac{\alpha_0|v_{\cal P}|_1^2}{|\Pi v_{\cal P}|_{\cal P'}}\gtrsim |v_{\cal P}|_{1}.
\]
The inf-sup condition \eqref{infsup} is proved.

When $\alpha$ is only piecewise continuous, let
\[
\bar{\alpha}(x,y)=\frac{1}{|\tau|}\int_\tau \alpha(x,y)dxdy, \forall (x,y)\in\tau\in {\cal P}
\]
and denote  the piecewise modulus of continuity of $\alpha$ by
\begin{equation*}\label{modulusofcontinuity}
m_{\cal P}(\alpha,h)=\sup\{\big|\alpha({\bf x}_1)-\alpha({\bf x}_2)\big|:\big|{\bf x}_1-{\bf x}_2\big|\le h,\
\forall\ {\bf x}_1, {\bf x}_2\in \tau, \forall\ \tau\in {\cal P}\}.
\end{equation*}
The fact that $\alpha$ is piecewise constant implies that  $m_{\cal
P}(\alpha,h)$ converges to 0 when $h$ goes to 0.
Since $\bar{\alpha}$ is  piecewise constant,  by Lemma \ref{le:infsup},
\begin{eqnarray*}
\bar{a}_{\cal P}(v_{\cal P},\Pi v_{\cal P})
&:=&  \sum_{E\in \E_{\cal P'}}[\Pi v_{\cal P}]\int_E \bar{\alpha}\frac{\partial v}{\partial {\bf n}}ds\ge \alpha_0 |v_{\cal P}|^2_1.
\end{eqnarray*}
On the other hand, by the same arguments in Theorem \ref{thm:continuity}, we have
\[
\left|a_{\cal P}(v_{\cal P},\Pi v_{\cal P})-\bar{a}_{\cal P}(v_{\cal P},\Pi v_{\cal P})\right|\lesssim m_{\cal P}(\alpha,h)|v_{\cal P}|^2_1,
\]
Then when $h$ is sufficiently small,
$$
a_{\cal P}(v_{\cal P},\Pi v_{\cal P})\ge \left(\alpha_0-cm_{\cal
P}(\alpha,h)\right)\big|v_{\cal P}\big|^2_{1,\Omega}\geq
\frac{\alpha_0}{2}\big|v_{\cal P}\big|^2_{1,\Omega}.$$
The inf-sup condition \eqref{infsup} is proved.
\end{proof}

%

\subsection{$H^1$ error estimate}
We begin with some preparations. First, by the inverse inequality,
\[
\big|v_{\cal P}\big|_{\cal P}\sim \big|v_{\cal P}\big|_1, \forall v_{\cal P}\in U_{\cal P}^r.
\]
With this equivalence, we deduce from the inf-sup condition \eqref{infsup} that
\begin{equation}\label{infsup2}
\inf_{v_{\cal P}\in U_{\cal P}^r}\sup_{w_{\cal P'} \in V_{\cal P'} }
\frac{a_{\cal P} (v_{\cal P} ,w_{\cal P'} )}{|v_{\cal P} |_{\cal P} |w_{\cal P'}|_{\cal P'}}\ge c_0,
\end{equation}
where the  constant $c_0=c_0(r)$ depends only on $r$.

Secondly, we denote by $L_1,\ldots,L_{r-1}$, the zeros of $L'_{r}(x)$, where
  $L_r(x)$ is the Legendre polynomial of order $r$ in the interval $[-1,1]$.  Moreover, we denote $L_0=-1, L_r=1$.
  The family of the points $L_j, j=0,\ldots,r$  are called the Lobatto points of degree $r$.

The Lobatto points on the rectangle $\tau_{i;j}, (i,j)\in\bZ_{m,n}$ are defined as
\[
   l_{i,k;j,l}=\left(\frac 12(x_i+x_{i-1}+h_i^x L_k), \frac 12(y_j+y_{j-1}+h_j^yL_l)\right),\ \ (k,l)\in \bZ_{r,r}.
\]
   Let $u_I\in U^r_{\cal P}$ be the interpolation of $u$ that satisfies
\[
u_I(l_{i,k;j,l})= u(l_{i,k;j,l}), (i,k;j,l)\in {\cal Z}_0.
\]
Note that this kind of interpolation has been used in the literature for superconvergence analysis, see, e.g., \cite{Zhang1, Zhang2} for the one-dimensional situation.


\medskip

We are now ready to present our main result.
\begin{theorem}Let $u$ be the solution of \eqref{eq:2.1} and \eqref{eq:2.2}, $u_{\cal P}$ the solution of \eqref{bilinear}. Then for sufficiently small $h$,
\begin{equation}\label{totalestimate}
\big|u-u_{\cal P}\big|_{\cal P}\le \frac{M}{c_0} \inf_{v_{\cal P}\in U_{{\cal P}}}\big|u-v_{\cal P}\big|_{\cal P}.
\end{equation}
Consequently, if $u\in H^{r+1}(\Omega)$,
\begin{equation}\label{optimalh1}
   |u-u_{\cal P}|_{1}\lesssim h^r|u|_{r+1},
\end{equation}
  where the hidden  constant is independent of the mesh size $h$.
\end{theorem}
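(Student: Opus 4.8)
The plan is to read \eqref{totalestimate} as a quasi-optimality (C\'ea-type) estimate for the Petrov--Galerkin scheme and to assemble it from the three facts already in hand: the variational exactness \eqref{exact}, the continuity \eqref{bounded}, and the discrete inf-sup condition \eqref{infsup2}. The first step is to record the Galerkin orthogonality. Subtracting the scheme \eqref{bilinear} from the exactness relation \eqref{exact} gives
\[
a_{\cal P}(u-u_{\cal P}, w_{\cal P'})=0 \qquad \forall\, w_{\cal P'}\in V_{\cal P'}.
\]

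Next I would introduce the FVM projection $P_{\cal P}\colon H^1_0(\Omega)\cap H^2_{\cal P}(\Omega)\to U^r_{\cal P}$ defined by $a_{\cal P}(P_{\cal P}v, w_{\cal P'})=a_{\cal P}(v,w_{\cal P'})$ for all $w_{\cal P'}\in V_{\cal P'}$; this is well defined and unique because \eqref{infsup2} forces injectivity on the (equidimensional) spaces $U^r_{\cal P}$ and $V_{\cal P'}$, and exactness identifies $u_{\cal P}=P_{\cal P}u$. Since $P_{\cal P}$ fixes every element of $U^r_{\cal P}$, it is a genuine idempotent, so for any $v_{\cal P}\in U^r_{\cal P}$ we have $u-u_{\cal P}=(I-P_{\cal P})u=(I-P_{\cal P})(u-v_{\cal P})$. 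To bound $P_{\cal P}$, apply \eqref{infsup2} to $P_{\cal P}v$, replace $a_{\cal P}(P_{\cal P}v,\cdot)$ by $a_{\cal P}(v,\cdot)$, and use \eqref{bounded}:
\[
c_0\,|P_{\cal P}v|_{\cal P}\le \sup_{w_{\cal P'}\in V_{\cal P'}}\frac{a_{\cal P}(v,w_{\cal P'})}{|w_{\cal P'}|_{\cal P'}}\le M\,|v|_{\cal P},
\]
so $\|P_{\cal P}\|\le M/c_0$ in the $|\cdot|_{\cal P}$ norm.

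To reach the sharp constant $M/c_0$ in \eqref{totalestimate} — rather than the $1+M/c_0$ that a bare triangle inequality would give — I would invoke the operator-norm identity $\|I-P_{\cal P}\|=\|P_{\cal P}\|$, valid for any nontrivial idempotent on a Hilbert space. This applies here because $|\cdot|_{\cal P}$ is a genuine norm on $H^1_0(\Omega)\cap H^2_{\cal P}(\Omega)$ (it dominates $|\cdot|_1$, and vanishing together with the $H^1_0$ constraint forces $v=0$) and is induced by an inner product summing the elementwise $H^1$ and scaled $H^2$ forms. Then $|u-u_{\cal P}|_{\cal P}\le\|I-P_{\cal P}\|\,|u-v_{\cal P}|_{\cal P}\le (M/c_0)|u-v_{\cal P}|_{\cal P}$, and taking the infimum over $v_{\cal P}\in U^r_{\cal P}$ yields \eqref{totalestimate}. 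The main obstacle is precisely this sharp-constant step: one must check carefully that $P_{\cal P}$ is an idempotent with respect to the Hilbert structure of $|\cdot|_{\cal P}$ so that the projection identity is legitimate; the rest is bookkeeping. Note also that the ``sufficiently small $h$'' hypothesis enters only through \eqref{infsup2} (the constant $c_0$ is $h$-independent when $\alpha$ is piecewise constant).

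For the concrete rate \eqref{optimalh1} I would specialize $v_{\cal P}=u_I$ in \eqref{totalestimate} and use standard $\mathbb{Q}_r$ interpolation estimates elementwise, namely $|u-u_I|_{1,\tau}\lesssim h^r|u|_{r+1,\tau}$ and $h_\tau|u-u_I|_{2,\tau}\lesssim h^r|u|_{r+1,\tau}$, which sum to $|u-u_I|_{\cal P}\lesssim h^r|u|_{r+1}$. Combining this with the trivial inequality $|u-u_{\cal P}|_1\le|u-u_{\cal P}|_{\cal P}$ (since $|v|_1^2=\sum_\tau|v|_{1,\tau}^2\le|v|_{\cal P}^2$) and the quasi-optimality \eqref{totalestimate} delivers $|u-u_{\cal P}|_1\lesssim h^r|u|_{r+1}$ with an $h$-independent constant, as claimed.
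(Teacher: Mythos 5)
Your proposal is correct and follows essentially the same route as the paper: Galerkin orthogonality, the continuity bound \eqref{bounded} combined with the inf-sup condition \eqref{infsup2}, the Xu--Zikatanov sharpening of the quasi-optimality constant, and then the choice $v_{\cal P}=u_I$ with standard interpolation theory. The only difference is that you actually unpack the Xu--Zikatanov step via the idempotent identity $\|I-P_{\cal P}\|=\|P_{\cal P}\|$ (checking that $|\cdot|_{\cal P}$ is an inner-product norm on $H^1_0(\Omega)\cap H^2_{\cal P}(\Omega)$), whereas the paper simply cites \cite{XuZikatanov2003}; your version is therefore slightly more self-contained but not a different argument.
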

\begin{proof}By \eqref{exact}, \eqref{bounded} and the inf-sup condition \eqref{infsup2}, for all $v_{\cal P}\in U_{{\cal P}}$,
there holds
\[
\big|u-u_{\cal P}\big|_{\cal P}\le \big|u-v_{\cal P}\big|_{\cal P}+\big|v_{\cal P}-u_{\cal P}\big|_{\cal P}
\le \big(1+\frac{M}{c_0}\big)\big|u-v_{\cal P}\big|_{\cal P}.
\]
Using a technique in Xu and Zikatanov
(\cite{XuZikatanov2003}), the constant $1+\frac{M}{c_0}$ in the above inequality can be reduced to
$\frac{M}{c_0}$. That is, \eqref{totalestimate} holds.

We conclude from the definition of $|\cdot|_{\cal P}$ and \eqref{totalestimate} that
\[
  |u-u_{\cal P}|_1\le |u-u_{\cal P}|_{\cal P}\lesssim \inf\limits_{v_{\cal P}\in U^r_{\cal P}}|u-v_{\cal P}|_{\cal P}.
\]
  Note that
\[
   \inf\limits_{v_{\cal P}\in U^r_{\cal P}}|u-v_{\cal P}|_{\cal P}\le |u-u_I|_{1}+h|u-u_I|_{2},
\]
where $u_I$ is the Lagrange interpolation of $u$ at the Lobatto points in the trial space $U^r_{\cal P}$.
By the standard approximation theory, we obtain the estimate \eqref{optimalh1}.
\end{proof}

\medskip

\subsection{Superconvergence and $L^2$ error estimates}
We first present a superconvergence result and then use it to establish our $L^2$ error estimate.
\begin{theorem}\label{weakinterpolation}   Assume that $u\in H_0^1(\Omega)\cap H^{r+2}(\Omega)$ is the solution of \eqref{eq:2.1}-\eqref{eq:2.2}, and
  $u_{\cal P}$ is the solution of the FV scheme \eqref{bilinear}. Then for all $w_{\cal P'}\in V_{\cal P'}$,
\begin{equation}\label{b2}
   |a_{\cal P}(u-u_I,w_{\cal P'})|\lesssim h^{r+1}|u|_{r+2,{\cal P}}|w_{\cal P'}|_{\cal P'},
\end{equation}
where $|u|_{r+2,{\cal P}}=\left(\sum_{\tau\in{\cal P}}|u|^2_{r+2,\tau}\right)^{1\over 2}$.
Consequently,
\begin{equation}\label{supercon}
   |u_I-u_{\cal P}|_1\lesssim h^{r+1}|u|_{r+2,{\cal P}}.
\end{equation}
\end{theorem}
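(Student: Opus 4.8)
The plan is to prove the superconvergence estimate \eqref{b2} first, and then derive \eqref{supercon} as an almost immediate consequence via the inf-sup condition. For \eqref{supercon}, I would observe that $u_I - u_{\cal P} \in U^r_{\cal P}$, so by \eqref{infsup2} there exists $w_{\cal P'} \in V_{\cal P'}$ with $a_{\cal P}(u_I - u_{\cal P}, w_{\cal P'}) \ge c_0 |u_I - u_{\cal P}|_{\cal P} |w_{\cal P'}|_{\cal P'}$. Then I would write $a_{\cal P}(u_I - u_{\cal P}, w_{\cal P'}) = a_{\cal P}(u_I - u, w_{\cal P'}) + a_{\cal P}(u - u_{\cal P}, w_{\cal P'})$. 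The second term vanishes by the variational exactness \eqref{exact} together with the scheme \eqref{bilinear}, since both equal $(f, w_{\cal P'})$. The first term is bounded by \eqref{b2}. Dividing by $|w_{\cal P'}|_{\cal P'}$ and using $|u_I - u_{\cal P}|_1 \le |u_I - u_{\cal P}|_{\cal P}$ gives \eqref{supercon}.

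The real work is \eqref{b2}, and the strategy should mirror the structure already used for the bilinear form. Using the edge-sum representation \eqref{bilinear1}, or equivalently the Gauss-point representation at the end of Section 2, I would write $a_{\cal P}(u - u_I, w_{\cal P'})$ as a sum over the interior dual edges of jump-weighted line integrals of $\alpha \, \partial(u-u_I)/\partial {\bf n}$. The key point is that the control-volume boundaries pass through Gauss points, so these integrals are amenable to Gauss-quadrature/orthogonality arguments. I expect to apply the Cauchy--Schwarz inequality to split off the factor $|w_{\cal P'}|_{\cal P'}$, reducing the task to showing that the remaining factor, a weighted sum of edge integrals of the interpolation-error derivative, is bounded by $h^{r+1}|u|_{r+2,{\cal P}}$.

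The heart of the matter is the superconvergence of the Lobatto interpolation error at the Gauss points. This is where the specific geometry pays off: the Lobatto interpolant $u_I$ is chosen precisely so that $\partial(u-u_I)/\partial x$ and $\partial(u-u_I)/\partial y$, evaluated along the Gauss lines, are one order smaller than a generic $O(h^r)$ estimate would suggest. I would exploit the fact that on each element $u - u_I$ has a derivative that is $L^2$-orthogonal to low-degree polynomials (by the defining property of Lobatto points and the Gauss quadrature being exact on $\bP_{2r-1}$), so that the relevant line-integral contributions against the piecewise-constant jumps of $w_{\cal P'}$ gain an extra power of $h$. Concretely, on a reference element I would expand $\partial(u - u_I)/\partial {\bf n}$ and use that its Gauss-quadrature average against the constant test data annihilates the leading $O(h^r)$ term, leaving an $O(h^{r+1})$ remainder controlled by $|u|_{r+2}$ through a standard Bramble--Hilbert / scaling argument.

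The main obstacle will be establishing this extra order rigorously, i.e.\ the orthogonality/superconvergence at the Gauss points, uniformly in the anisotropy of the rectangular mesh. I would carry it out by a scaling to the reference square $[-1,1]^2$, identifying the leading term of the interpolation error via the Lobatto structure, verifying that the Gauss-weighted sum of this leading term against a constant cancels exactly, and then bounding the remainder with a Bramble--Hilbert estimate before scaling back. Care is needed to track the $h_i^x, h_j^y$ powers separately so that the tensor-product structure yields the clean $h^{r+1}|u|_{r+2,{\cal P}}$ bound after summing over all elements via Cauchy--Schwarz.
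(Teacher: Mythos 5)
Your proposal follows the paper's proof essentially verbatim: the paper derives \eqref{supercon} from \eqref{b2} exactly as you do (inf-sup plus the Galerkin orthogonality coming from \eqref{exact} and \eqref{bilinear}), and proves \eqref{b2} by Cauchy--Schwarz on the edge-sum representation \eqref{bilinear1} followed by the Gauss-line superconvergence estimate \eqref{stresspoint} for $\partial(u-u_I)$, which it simply cites from the superconvergence literature rather than re-deriving as you propose. One caution: since Cauchy--Schwarz is applied first, what is actually needed is the pointwise vanishing of the leading term of the error derivative at each Gauss point (that term is proportional to $L_r$, which vanishes there), not merely the cancellation of its Gauss-weighted average against the constant test data; your write-up wavers between the two, and only the former closes the argument.
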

\begin{proof}
We can derive the following inequality by the standard superconvergence argument, see, e.g., \cite{Zhu.QD;LinQ1989},
\begin{equation}\label{stresspoint}
   \left\|\frac{\partial (u-u_{I})}{\partial x}(g^x_{i,k},\cdot)\right\|_{L^\infty[g^y_{j,l},g^y_{j,l+1}]},    \left\|\frac{\partial (u-u_{I})}{\partial y}(\cdot,g^y_{j,l})\right\|_{L^\infty[g^x_{i,k},g^x_{i,k+1}]} \lesssim h^r|u|_{r+2,1,\tilde{\tau}_{i,k;j,l}},
\end{equation}
where $\tilde{\tau}_{i,k;j,l}=[g^x_{i,k-1},g^x_{i,k+1}]\times [g^y_{j,l-1},g^y_{j,l+1}]$. It follows from \eqref{bilinear1} that
\begin{eqnarray*}
  \big|a_{\cal P}(u-u_I,w_{\cal P'})|&\le& \|\alpha\|_\infty|w_{\cal P'}|_{\cal P'} \left(\sum_{E\in \E_{\cal P'}} h_E\int_E \left(\frac{\partial (u-u_I)}{\partial {\bf n}}\right)^2ds\right)^{\frac12}\\
     &\lesssim & h^{r+1}|w_{\cal P'}|_{\cal P'}|u|_{r+2,{\cal P}},
\end{eqnarray*}
where in the last step we have used \eqref{stresspoint} and the fact $|u|_{r+2,1,\tau_i}\lesssim h_i^{\frac 12}|u|_{r+2,\tau_i}$ and

We next show \eqref{supercon}. By the inf-sup condition \eqref{infsup},
\[
   |u_I-u_{\cal P}|_1\lesssim \sup\limits_{w_{\cal P'}\in V_{\cal P'}}
  \frac {a_{\cal P}(u_{\cal P}-u_I,w_{\cal P'})}{|w_{\cal P'}|_{\cal P'}}
   =\sup\limits_{w_{\cal P'}\in V_{\cal P'}}\frac {a_{\cal P}(u-u_I,w_{\cal P'})}{|w_{\cal P'}|_{\cal P'}}.
\]
Combining the above inequality with \eqref{b2}, we derive \eqref{supercon}.
\end{proof}

As a direct consequence of the superconvergence result \eqref{supercon}, we have the following $L^2$ estimate.
\begin{theorem}
   Assume that $u\in H_0^1(\Omega)\cap H^{r+2}(\Omega)$ is the solution of \eqref{eq:2.1}-\eqref{eq:2.2}, and
  $u_{\cal P}$ is the solution of the FV scheme \eqref{bilinear}, then there holds
\begin{equation}\label{optimall2}
 \|u-u_{\cal P}\|_{0}\lesssim h^{r+1}\|u\|_{r+2}.
\end{equation}
\end{theorem}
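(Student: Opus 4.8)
The plan is to prove the $L^2$ estimate \eqref{optimall2} by combining the superconvergence result \eqref{supercon} with a standard approximation estimate for the Lobatto interpolant $u_I$. The key observation is that the triangle inequality splits the error as
\[
\|u-u_{\cal P}\|_0\le \|u-u_I\|_0+\|u_I-u_{\cal P}\|_0.
\]
For the first term, I would invoke the standard interpolation theory for the Lagrange interpolant at the Lobatto points: since $u_I\in U^r_{\cal P}$ reproduces bi-polynomials of degree $r$ and $u\in H^{r+2}(\Omega)$, one has $\|u-u_I\|_0\lesssim h^{r+1}\|u\|_{r+1}\lesssim h^{r+1}\|u\|_{r+2}$, which is already of the desired order (in fact the optimal $L^2$ interpolation rate is $h^{r+1}$). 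The real content is controlling the second term $\|u_I-u_{\cal P}\|_0$ by the superconvergence estimate.

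For the second term, the natural route is to pass from the $L^2$-norm to the $H^1$-seminorm. Since $u_I-u_{\cal P}\in U^r_{\cal P}\subset H^1_0(\Omega)$, the Poincar\'e--Friedrichs inequality on the fixed domain $\Omega$ gives
\[
\|u_I-u_{\cal P}\|_0\lesssim |u_I-u_{\cal P}|_1,
\]
with a constant depending only on $\Omega$. Applying the superconvergence bound \eqref{supercon} then yields $\|u_I-u_{\cal P}\|_0\lesssim |u_I-u_{\cal P}|_1\lesssim h^{r+1}|u|_{r+2,{\cal P}}\lesssim h^{r+1}\|u\|_{r+2}$. Combining the two estimates closes the argument:
\[
\|u-u_{\cal P}\|_0\le \|u-u_I\|_0+\|u_I-u_{\cal P}\|_0\lesssim h^{r+1}\|u\|_{r+2}.
\]

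Notice that this is exactly the mechanism advertised in the introduction: the superconvergence estimate lets us bypass the Aubin--Nitsche duality argument entirely, because the crude bound $\|u_I-u_{\cal P}\|_0\lesssim|u_I-u_{\cal P}|_1$ loses one power of $h$, yet the superconvergence already supplied one extra power (the bound is $h^{r+1}$ rather than the optimal-energy $h^r$), so the two effects cancel and we still land at the optimal $L^2$ rate $h^{r+1}$. I do not expect any genuine obstacle here, since both ingredients are already in hand; the only point requiring a little care is ensuring the interpolation estimate $\|u-u_I\|_0\lesssim h^{r+1}\|u\|_{r+2}$ holds for the Lobatto interpolant specifically, but this follows from the Bramble--Hilbert lemma together with a scaling argument on the reference element, exactly as for any $Q_r$-unisolvent nodal interpolation, and the hypothesis $u\in H^{r+2}(\Omega)$ provides ample regularity.
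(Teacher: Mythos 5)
Your proof is correct and follows essentially the same route as the paper: a triangle inequality through the Lobatto interpolant $u_I$, the Poincar\'e inequality to bound $\|u_I-u_{\cal P}\|_0$ by $|u_I-u_{\cal P}|_1$, the superconvergence estimate \eqref{supercon}, and the standard interpolation bound $\|u-u_I\|_0\lesssim h^{r+1}\|u\|_{r+1}$. Your added commentary on why the lost power of $h$ from Poincar\'e is compensated by the extra power from superconvergence is a nice clarification but does not change the argument.
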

\begin{proof}
By the triangle inequality,
\[
\|u-u_{\cal P}\|_0\le \|u-u_I\|_0+\|u_{\cal P}-u_I\|_0
\]
where $u_I$ is the interpolation of $u$ given in the previous subsection.

Since $u_I=u_{\cal P}=0$ on $\partial\Omega$,  we have by the Poincar\'e inequality that
\[
\|u_{\cal P}-u_I\|_0\lesssim  |u_{\cal P}-u_I|_1 \lesssim h^{r+1}|u|_{r+2}.
\]
Moreover,
\[
\|u-u_I\|_0\lesssim h^{r+1}\|u\|_{r+1}\le h^{r+1}\|u\|_{r+2}.
\]
Then we have \eqref{optimall2}.
\end{proof}

\begin{remark}
In the above $L^2$ error estimate, we do not need to use the so-called Aubin-Nitsche techniques.
\end{remark}

\section{Numerical Results}
In this section, we present a numerical example to validate the theoretical results proved in previous sections.
We consider \eqref{eq:2.1}-\eqref{eq:2.2} with $\alpha=1$ and $\Omega=[0,1]^2$. We choose the right-hand side function
\[
f(x,y)=2\pi^2\sin\pi x\sin\pi y, (x,y)\in [0,1]^2
\]
which allows the exact solution
\[
u(x,y)=sin\pi x\sin\pi y, (x,y)\in [0,1]^2.
\]
We use FV schemes \eqref{bilinear} with $r=2,3,4,5$ to compute FVM approximate solutions
of $u$.  The partition ${\cal P}_j$,
$j=1,\dots,6$, are obtained by uniformly refining the unite square $[0,1]^2$. For simplicity, we write $u_j$, instead of $u_{\cal P_j}$, as the finite volume solution $u_{\cal P_j}\in U^r_{\cal P_j}$.

The numerical results are demonstrated in Figures 1, 2, 3, and 4, respectively. In these figures, we depict $n_j^{-r}$ by the solid curve with `$\Box$' and depict $n_j^{-(r+1)}$ by the dash line, where $n_j$ is the number of subintervals of the underlying grid ${\cal P}_j$.
We depict $|u-u_j|_{H^1}$ by the solid curve with `$\star$', $\|u-u_j\|_{L^2}$  by the solid curve with `$\triangle$', and $|u_I-u_j|_{H^1}$ by the solid curve with `$*$'.
We observe that $|u-u_j|_{H^1}$ decays with the convergence rate $n_j^{-r}$ which supports our theory \eqref{optimalh1}. We also observe that
both $\|u-u_j\|_{L^2}$ and $|u_I-u_j|_{H^1}$ decay with $n_j^{-(r+1)}$. These facts support our $L^2$ estimate \eqref{optimall2} and superconvergene result \eqref{supercon}.

%
%

\section{Conclusions and Future Works}
The design and analysis of high-order FV schemes are challenging tasks. This paper is the second one
in its series that attempts to set up a mathematical foundation for a family of high order FV schemes.
In a previous work (\cite{CaoLiZhangZou2012}), we studied convergence and superconvergence
properties of FV schemes of any order for the one-dimensional elliptic equations.
The higher dimensional case is fundamentally different from, and much more complicated than the one dimensional case. In this article, we only report our results for rectangular meshes. For the family of higher-order FV schemes discussed in this paper, we obtained almost the same basic theoretical results as for the counterpart higher-order FEM.
The results for more general meshes and more general equations will be reported in a forthcoming paper \cite{ZhangZou2012}.

\section*{Acknowledgement}
The authors would like to thank a Ph.D. student, Waixiang Cao for her assistance in the numerical experiments.

\bigskip
\bigskip

\begin{thebibliography}{10}
\bibitem{ArnoldBrezziCockburnMarini2003} D. Arnold, F. Brezzi, B. Cockburn, and L.
 Marini. Unified analysis of discontinuous Galerkin methods for
elliptic problems, {\it SIAM J. Numer. Anal.}, 39:1749--1779, 2003.

%

\bibitem{Bank.R;Rose.D1987}
R. E. Bank and D. J. Rose.
\newblock Some error estimates for the box scheme.
\newblock {\em SIAM J. Numer. Anal.}, 24:777--787, 1987.

\bibitem{Barth.T;Ohlberger2004}
T.~Barth and M.~Ohlberger.
\newblock Finite volume methods: foundation and analysis,
\newblock in :  {\em Encyclopedia of Computational Mechanics, Volume~1,
  Chapter~15}. John Wiley \& Sons, 2004.

\bibitem{Cai.Z1991}
Z. Cai.
\newblock On the finite volume element method.
\newblock {\em Numer. Math.}, 58:713--735, 1991.

\bibitem{Cai.Z_Park.M2003}
Z. Cai, J. Douglas, and M. Park.
\newblock Development and analysis of higher order finite volume methods over
  rectangles for elliptic equations.
\newblock {\em Adv. Comput. Math.}, 19:3--33, 2003.

\bibitem{CaoLiZhangZou2012}
W. Cao, Z. Li, Z. Zhang, and Q. Zou.
\newblock Supercovergence finite volume schemes for 1D general elliptic equations.
\newblock {\em Submitted}, 2012.

\bibitem{Chen.L} L. Chen.
\newblock A new class of high order finite volume methods for second order elliptic equations.
\newblock {\em SIAM J.  Numer. Anal.}, 47:4021-4043, 2010.


\bibitem{ChenWuXu2011} Z. Chen, J. Wu, and Y. Xu.
\newblock Higher-order finite volume methods for elliptic boundary value problems.
\newblock {\em Adv. in Comput. Math.}, in Press.

\bibitem{chou-kl} S. H. Chou, D. Y. Kwak, and Q. Li.
$L^p$ error estimates and superconvergence for covolume or finite volume element methods.
\newblock{\em Numer. Methods Patial Differential Eq.}, 19 : 463-486, 2003.


\bibitem{CuiYe2010}
M. Cui and X. Ye.
\newblock Unified analysis of finite volume methods for the Stokes equations.
\newblock{\em SIAM J. Numer. Anal.}, 48:824-839, 2010.

\bibitem{DavisRabinowitz1984} P. J. Davis and P. Rabinowitz,
\newblock {\em Methods of Numerical Integration},
\newblock 2nd Ed., Academic Press, Boston, 1984.

\bibitem{douglas-dupont} J. Douglas  and T. Dupont.
\newblock Galerkin approximations for the two point boundary problem using continuous,
piecewise polynomial spaces,
\newblock {\em Numer. Math.}, 22:99--109, 1974.

\bibitem{Emonot1992}
Ph. Emonot.
\newblock Methods de volums elements finis : applications aux equations de
  Navier-Stokes et resultats de convergence.
\newblock {\em Dissertation}, Lyon, 1992.

\bibitem{Ewing.R;Lin.T;Lin.Y2002}
R. Ewing, T. Lin, and Y. Lin.
\newblock On the accuracy of the finite volume element based on piecewise
  linear polynomials.
\newblock {\em SIAM J. Numer. Anal.}, 39:1865--1888, 2002.


\bibitem{EymardGallouetHerbin2000}
R. Eymard, T. Gallouet, and R. Herbin.
\newblock {\em Finite Volume Methods}, in : {\em Handbook of Numerical Analysis}, VII,  713-1020,  
\newblock North-Holland, Amsterdam, 2000.




\bibitem{Hackbusch.W1989a}
W. Hackbusch.
\newblock On first and second order box methods.
\newblock {\em Computing}, 41:277--296, 1989.

\bibitem{HymanKnappScovel1992}J. M. Hyman, R. Knapp and J.C. Scovel.
\newblock {High order finite volume approximations of differential operators on nonuniform grids}.
\newblock{\em Physica D},  60:112--138, 1992.

\bibitem{Lazarov1996} R. Lazarov, I. Michev, and P. Vassilevski.
\newblock{Finite volume methods for convection-diffusion problems}.
\newblock{\em SIAM J. Numer. Anal.}, 33:31--55, 1996.

\bibitem{LeVeque2002}
 R. J. LeVeque.
\newblock{\em Finite Volume Methods for Hyperbolic Problems.}
\newblock {Cambridge Texts in Applied Mathematics,}
   Cambridge University Press, Cambridge, 2002.

\bibitem{Li.R2000} R. Li, Z. Chen, and W. Wu.
\newblock {\em The Generalized Difference Methods for Partial differential Equations}.
\newblock Marcel Dikker, New Youk, 2000.

\bibitem{Liebau1996} F. Liebau.
\newblock The finite volume element method with quadratic basis function.
\newblock {\em Computing}, 57:281--299, 1996.

\bibitem{Nocolaides1995}  R. A. Nicolaides, T. A. Porsching, and C. A. Hall.
\newblock Covolume methods in computational fluid dynamics,
\newblock in: {\em  Computational Fluid Dynamics Review,}  279-299, 
Wiley, New York, 1995.


\bibitem{Ollivier-Gooch;M.Altena2002}
C. Ollivier-Gooch and M. Altena.
\newblock A high-order-accurate unconstructed mesh finite-volume scheme for the
  advection-diffusion equation.
\newblock {\em J. Comput. Phys.}, 181:729--752, 2002.

\bibitem{Patanker1980} S. V. Patanker, {\it Numerical Heat Transfer and
Fluid Flow}, Ser. Comput. Methods Mech. Thermal Sci., McGraw Hill,
New York, 1980.

\bibitem{Plexousakis_2004}
M. Plexousakis and G. Zouraris.
\newblock On the construction and analysis of high order locally conservative
  finite volume type methods for one dimensional elliptic problems.
\newblock {\em SIAM J. Numer. Anal.}, 42:1226--1260, 2004.

\bibitem{C.Shu2003}
C. W. Shu.
\newblock High order finite difference and finite volume WENO schemes and
  discontinous Galerkin methods for CFD.
\newblock {\em J. Comput. Fluid Dyn.}, 17:107--118, 2003.

\bibitem{Tian.Chen_1991}
M. Tian and Z. Chen.
\newblock Quadratical element generalized differential methods for elliptic equations.
\newblock {\em Numer. Math. J. Chinese Univ.}, 13:99--113, 1991.

\bibitem{Wang.T.Gu.Y2010}
T. Wang and Y. Gu.
\newblock Superconvergence biquadratic finite volume element method for two
  dimensional Poisson's equation.
\newblock {\em Compu. Appl. Math.}, 234:447--460, 2010.


\bibitem{XuZikatanov2003}
J. Xu and L. Zikatanov.
\newblock Some observations on Babuska-Brezzi conditions.
\newblock {\em Numer. Math.}, 94:195--202, 2003.

\bibitem{Xu.J.Zou.Q2009}
J. Xu and Q. Zou.
\newblock Analysis of linear and quadratic simplitical finite volume methods
  for elliptic equations.
\newblock {\em Numer. Math.}, 111:469--492, 2009.


\bibitem{ZhangZou2012} Z. Zhang and Q. Zou.
\newblock Finite volume methods of any order on quadrilateral meshes for general elliptic equations. \newblock {\em In preparation}, 2012.

\bibitem{Zhang1} Z. Zhang.
\newblock Finite element superconvergent approximation for one-dimensional singularly perturbed problems.
\newblock {\em Numer. Meth. Part. D. E.} 18:374--395, 2002.

\bibitem{Zhang2} Z. Zhang.
\newblock Superconvergence of spectral collocation and p-version methods in one dimensional problems.
\newblock {\em Math. Comp.} 74:1621--1636, 2005.


\bibitem{Zhu.QD;LinQ1989}
 Q. Zhu and Q. Lin.
\newblock {\em Superconvergence Theory of the Finite Element Method} (in Chinese).
\newblock Hunan Science Press, Hunan, 1989.


\bibitem{zou2009a} Q. Zou.
\newblock Hierarchical error estimates for finite volume approximation solution of elliptic equations,
\newblock {\em Appl. Numer. Math.}, 60:142--153, 2010.

%

\end{thebibliography}
\end{document}